\newcommand{\inlineitem}[1][]{%
\ifnum\enit@type=\tw@
    {\descriptionlabel{#1}}
  \hspace{\labelsep}%
\else
  \ifnum\enit@type=\z@
       \refstepcounter{\@listctr}\fi
    \quad\@itemlabel\hspace{\labelsep}%
\fi} \makeatother
\newcommand{\gth}{\theta}
\newcommand{\gp}{\pi}
\newcommand{\gs}{\sigma}
\newcommand{\gf}{\phi}
\newcommand{\Gd}{\Delta}
\newcommand{\Gom}{\Omega}
\newcommand{\bs}{\backslash}
\newcommand{\nin}{\notin}
\newcommand{\ti}{\tilde}
\newcommand{\mbb}{\mathbb}
\newcommand{\mcl}{\mathcal}
\newcommand{\lra}{\longrightarrow}
\newcommand{\Ra}{\Rightarrow}
\newcommand{\Llra}{\Longleftrightarrow}
\newcommand{\equ}[1]{%
\begin{equation*}
#1
\end{equation*}
}
\newcommand{\equa}[1]{%
\begin{equation*}
\begin{aligned}
#1
\end{aligned}
\end{equation*}
}
\newtheorem{theorem}{Theorem}[section]
\newtheorem{prop}[theorem]{Proposition}
\newtheorem{lemma}[theorem]{Lemma}
\newtheorem{ques}[theorem]{Question}
\newtheorem{example}[theorem]{Example}
\newtheorem{note}[theorem]{Note}
\def\namedlabel#1#2{\begingroup
   \def\@currentlabel{#2}%
   \label{#1}\endgroup
}
\newtheorem*{thmA}{\bf{Theorem A}}
\newtheorem*{thmB}{\bf{Theorem B}}
\theoremstyle{definition}
\newtheorem{defn}[theorem]{Definition}
\theoremstyle{remark}
\numberwithin{equation}{section}
\begin{document}
\title[On the Triangles in Certain Types of Line Arrangements]{On the Triangles in Certain Types of Line Arrangements}
\author[C.P. Anil Kumar]{Author: C.P. Anil Kumar*}
\address{Post Doctoral Fellow in Mathematics, Harish-Chandra Research Institute,
Chhatnag Road, Jhunsi, Prayagraj (Allahabad)-211019, Uttar Pradesh, INDIA
}
\email{akcp1728@gmail.com}
\thanks{*The work is done when the author is a Post Doctoral Fellow at HRI, Allahabad.}
\subjclass[2010]{Primary: 51D20, Secondary: 52C30}
\keywords{Line Arrangements in the Plane, Global Cyclicity, Infinity Type Line Arrangements}
\begin{abstract}
In this article, we combinatorially describe the triangles that are present in two types of line arrangements, those which have global cyclicity and those which are infinity type line arrangements. A combinatorial nomenclature has been described for both the types and some properties of the nomenclature have been proved. Later using the nomenclature we describe the triangles present in both types of line arrangements in main Theorems~\ref{theorem:TLAGC},~\ref{theorem:TITLA}. We also prove that the set of triangles uniquely determine, in a certain precise sense, the line arrangements with global cyclicity and not the infinity type line arrangements where counter examples have been provided. In Theorem~\ref{theorem:LineAtInfinity}, given a nomenclature, we characterize when a particular line symbol in the nomenclature is a line at infinity for the arrangement determined by the nomenclature. 
\end{abstract}
\maketitle
\section{\bf{Introduction}}
Line arrangements in the plane have been studied extensively in the literature in various contexts (B.~Grunbaum~\cite{MR0307027} and the references therein). The authors such as H.~Harborth~\cite{MR0809189}, G.~B.~Purdy~\cite{MR0523090},~\cite{MR0566442},~\cite{MR0560588}, J.~P.~Roudneff~\cite{MR0852111},~\cite{MR0892402},~\cite{MR0920696},~\cite{MR1368516}, D.~Ljubic, J.~P.~Roudneff, B.~Sturmfels~\cite{MR0978064}, Z.~Furedi, I.Palasti~\cite{MR0760946}, G.~J~Simmons~\cite{MR0295191} and T.~O.~Strommer~\cite{MR0462985} have worked on different aspects of triangles, quadrilaterals and pentagons present in line arrangements either in the Euclidean plane or in the projective plane.  However a combinatorial characterization of the triangles in a line arrangement in the Euclidean plane has not been done before. Here for certain types of line arrangements we characterize the triangles present combinatorially and mention some consequences. This characterization requires a certain combinatorial nomenclature for line arrangements. This is done for two types of line arrangements, those which have global cyclicity (Definition~\ref{defn:GlobalCyclicity}) and those which are of infinity type (Definition~\ref{defn:InfinityTypeLineArrangement}). In the last section, we prove an important theorem of characterizing certain types of lines using the nomenclature of infinity type arrangements.
\section{\bf{Definitions}}
In this section we mention a few definitions.
\begin{defn}[Lines in Generic Position in the Plane $\mbb{R}^2$ or Line Arrangement]
	\label{defn:linearrangement}
	~\\
	Let $n$ be a positive integer. We say that a finite set $\mcl{L}_n=\{L_1,L_2,\ldots,L_n\}$ of lines in $\mbb{R}^2$ is in a generic position or is a line arrangement if the following two conditions hold.
	\begin{enumerate}
		\item No two lines are parallel.
		\item No three lines are concurrent.
	\end{enumerate}
	In this case we say that $\mcl{L}_n$ is a line arrangement. We say $n$ is the cardinality of the line arrangement.
\end{defn}

Now we give the definition of an isomorphism between two line arrangements. 
\begin{defn}[Isomorphism]
	\label{defn:Iso}
	~\\
	Let $n,m$ be positive integers. Let \equ{\mcl{L}^1_n=\{L^1_1,L^1_2,\ldots,L^1_n\},\mcl{L}^2_m=\{L^2_1,L^2_2,\ldots,L^2_m\}} 
	be two line arrangements in the plane $\mbb{R}^2$ of cardinalities $n,m$ respectively. 
	We say that a map $\gf:\mcl{L}^1_n \lra \mcl{L}^2_m$ is an isomorphism between the line arrangements if 
	\begin{enumerate}
		\item the map $\gf$ is a bijection, (that is, $n=m$) with $\gf(L^1_i)=L^2_{\gf(i)},1\leq i\leq n$ and
		\item for any $1\leq i\leq n$ the order of intersection vertices on the lines $L^1_i,L^2_{\gf(i)}$ agree via the bijection induced by $\gf$ on its subscripts. There are four possibilities of pairs of orders and any one pairing of orders out of the four pairs must agree via the bijection induced by $\gf$ on its subscripts. 
	\end{enumerate}
	Two mutually opposite orders of points arise on any line in the plane. We say that the isomorphism $\gf$ is trivial on subscripts if in addition we have $\gf(i)=i,1\leq i\leq n$.
\end{defn}
\begin{note}
Henceforth we shall assume that $\mcl{L}_n=\{L_1,L_2,\ldots,L_n\}$ is a line arrangement in the plane with respective angles $0<\gth_1<\gth_2<\ldots<\gth_n<\gp$ as a convention where angles are made with respect to the positive $X\operatorname{-}$axis.
\end{note}

Here we define an equivalence relation on the set of triangles that are present in any line arrangement.
\begin{defn}[An Equivalence Relation]
	\label{defn:EquivalenceTriangles}
	~\\
	Let $\mcl{L}_n=\{L_1,L_2,\ldots,L_n\}$ be a line arrangement in the plane. We say that two triangles 
	$\Gd L_{a}L_{b}L_{c},\Gd L_dL_eL_f, 1\leq a<b<c\leq n,1\leq d<e<f\leq n$ in the line arrangement $\mcl{L}_n$ are corner adjacent if $\{a,b,c\}\cap\{d,e,f\}$ has precisely two elements. We say that two triangles $\Gd L_{a}L_{b}L_{c},\Gd L_dL_eL_f, 1\leq a<b<c\leq n,1\leq d<e<f\leq n$ are equivalent if they are same or if there is a sequence of triangles $\Gd L_{a_i}L_{b_i}L_{c_i},1\leq a_i<b_i<c_i\leq m,1\leq i\leq n$ in the line arrangement $\mcl{L}_n$ with $a_1=a,b_1=b,c_1=c,a_m=d,b_m=e,c_m=f$ such that $\Gd L_{a_i}L_{b_i}L_{c_i}$ is corner adjacent to $\Gd L_{a_{i+1}}L_{b_{i+1}}L_{c_{i+1}}$ for $1\leq i\leq m-1$. It is clear that this is an equivalence relation. 
\end{defn}
\begin{example}
	\begin{figure}[h]
		\centering
		\includegraphics[width = 0.8\textwidth]{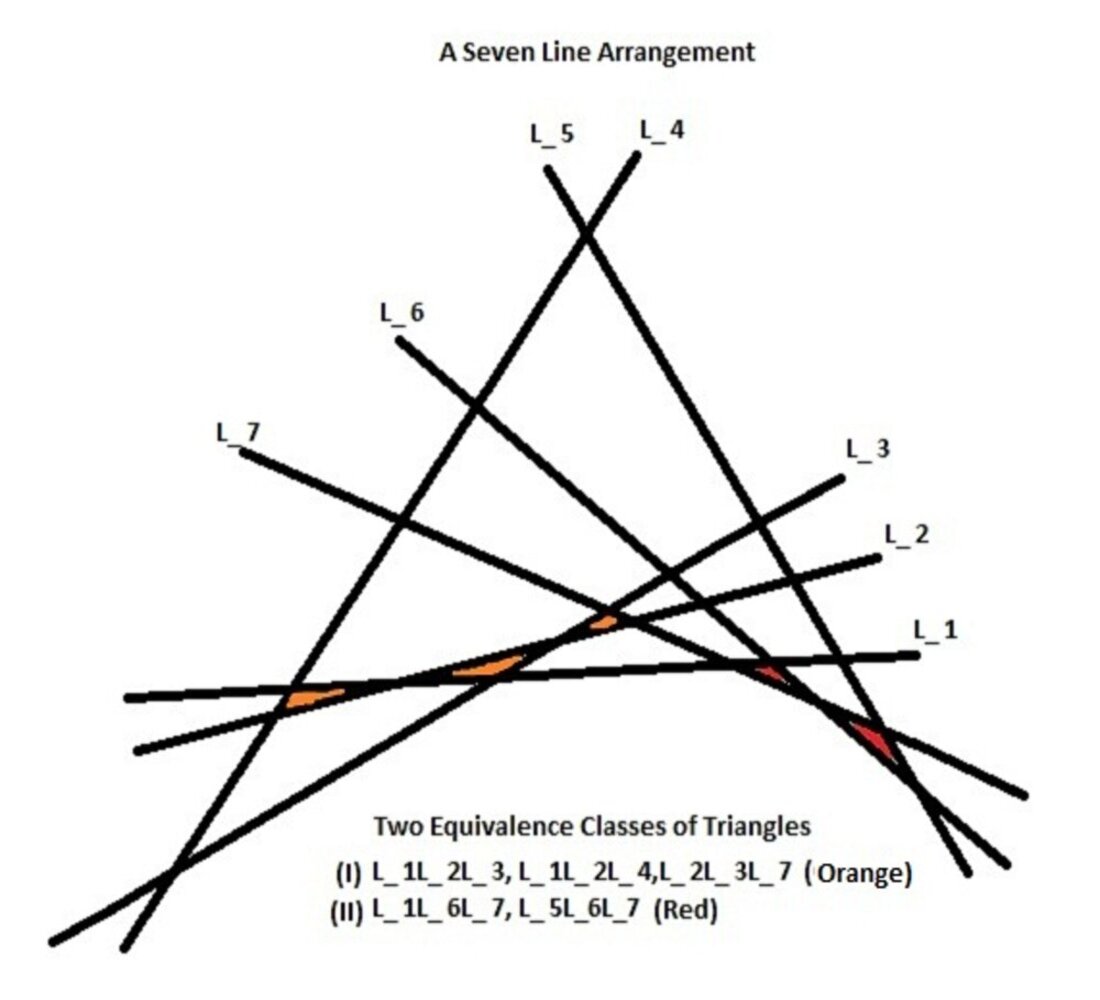}
		\caption{Seven-Line Arrangement}
		\label{fig:Zero}
	\end{figure}
We give an example of a seven-line arrangement with two equivalence classes in Figure~\emph{\ref{fig:Zero}}. One equivalence class of triangles contains $\Gd L_1L_2L_4,\Gd L_1L_2L_3$, $\Gd L_2L_3L_7$. Another equivalence class of triangles contains $\Gd L_1L_6L_7,\Gd L_5L_6L_7$.
The concept of adjacent triangles in a line arrangement is also mentioned in D.~Ljubic, J.~P.~Roudneff, B.~Sturmfels~\cite{MR0978064}.
\end{example}

We define the corner points of a line arrangement in the plane.
\begin{defn}
Let $\mcl{L}_n=\{L_1,L_2,\ldots,L_n\}$ be a line arrangement in the plane. For any $1\leq i\neq j\leq n$ the vertex $P=L_i\cap L_j$ is a said to be a corner point if the point $P$ is the end point of intersection on both the lines $L_i$ and $L_j$. In Figure~\ref{fig:Zero} the corner vertices are $L_3\cap L_4,L_4\cap L_5,L_5\cap L_6$.
\end{defn}

\section{\bf{Definitions, Nomenclature and Main Theorem on Line Arrangements with Global Cyclicity}}
In this section we mention the required definitions to state the first main Theorem~\ref{theorem:TLAGC}.
Now we define a line arrangement with global cyclicity.
\begin{defn}[Nomenclature: Existence of Global Cyclicity with an $n\operatorname{-}$cycle]
\label{defn:GlobalCyclicity}

Let $\mcl{L}_n=\{L_1,L_2,\ldots,L_n\}$ be a line arrangement in the plane. We say that there exists global cyclicity in the line arrangement $\mcl{L}_n$ if all the lines form the sides of a convex $n\operatorname{-}$gon in some cyclic order of the lines. 
Suppose the angle made by the line $L_i$ with respect to the positive $X\operatorname{-}$axis is $\gth_i,1\leq i\leq n$ and suppose we have $0< \gth_1< \gth_2<\ldots <\gth_n<\gp$ (conventional notation). Let the convex $n\operatorname{-}$gon in the anticlockwise cyclic order be given by 
\equ{L_{1=a_1}\lra L_{a_2}\lra \ldots \lra L_{a_{n-1}}\lra L_{a_n}\lra L_{a_1}.} Then we say that the line arrangement $\mcl{L}_n$ has global cyclicity having gonality $n\operatorname{-}$cycle $(1=a_1a_2\ldots a_n)$. For this type of line arrangement the nomenclature is just the cycle $(1=a_1a_2\ldots a_n)$. This cycle has the property that there exists $2\leq r\leq n-1$ such that 
\begin{itemize}
\item $1=a_1<a_2<\ldots<a_r$,
\item $a_{r+1}<a_{r+2}<\ldots<a_n$,
\item $1<a_{r+1}<a_r$.
\end{itemize}
Moreover any such cycle can occur as a nomenclature of a conventional line arrangement with global cyclicity.
\end{defn}
\subsection{\bf{The First Main Theorem}}
We state the first main theorem of the article.
\begin{thmA}[Triangles Determine the Line Arrangement with Global Cyclicity]
\namedlabel{theorem:TLAGC}{A}
~\\
Let $n\geq 4$ be a positive integer and $\mcl{L}^i_n=\{L^i_1,L^i_2,\ldots,L^i_n\},i=1,2$ be two line arrangements in the plane. Let the angle made by the line $L^i_j$ with respect to the positive $X\operatorname{-}$axis be $\gth^i_j$ and suppose $0<\gth^i_1<\gth^i_2<\ldots<\gth^i_n<\gp$ for $i=1,2$. Suppose in addition both the line arrangements have global cyclicity with anti-clockwise gonality $n\operatorname{-}$cycles $(1=a_1^ia_2^i\ldots a_n^i),i=1,2$ respectively. Then the following $(1)\operatorname{-}(3)$ are equivalent.
\begin{enumerate}[label=\emph{(\arabic*)}]
\item The line arrangements $\mcl{L}^i_n,i=1,2$ are isomorphic with the isomorphism which is trivial on subscripts.
\item The gonality cycles $(1=a_1^ia_2^i\ldots a_n^i),i=1,2$ are equal or same.
\item $L^1_iL^1_jL^1_k$ is a triangle in line arrangement $\mcl{L}^1_n$ if and only if $L^2_iL^2_jL^2_k$ is a triangle in line arrangement $\mcl{L}^2_n$ for any $1\leq i<j<k\leq n$.
\end{enumerate}
There are at most two equivalence classes of triangles. 
Moreover if $1=a_1^i<a_2^i<\ldots<a_r^i;\ a_{r+1}^i<a_{r+2}^i<\ldots<a_n^i;\ 1=a_1^i<a^i_{r+1}<a^i_r,\ i=1,2$ for some $2\leq r\leq n-1$ then the triangles are given by
\begin{itemize}
\item $\Gd L^i_{a^i_{j}}L^i_{a^i_{j+1}}L^i_{a^i_{j+2}},1\leq j<j+2\leq r$ or $r+1\leq j<j+2\leq n$,
\item $\Gd L^i_{a^i_1}L^i_{a^i_{n-1}}L^i_{a^i_n}$ if $n\geq r+2$,
\item $\Gd L^i_{a^i_{1}}L^i_{a^i_{2}}L^i_{a^i_{n}}$ if $a^i_{2}<a^i_{n}$,
\item $\Gd L^i_{a^i_{r+1}}L^i_{a^i_{r-1}}L^i_{a^i_{r}}$ if $a^i_{r+1}<a^i_{r-1}$,
\item $\Gd L^i_{a^i_{r+1}}L^i_{a^i_{r+2}}L^i_{a^i_{r}}$ if $a^i_{r+2}<a^i_{r}$ and $n\geq r+2$.
\end{itemize}
There are $2^{n-1}-n$ such isomorphism classes.
\end{thmA}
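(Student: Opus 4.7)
The plan is to establish the equivalences $(1) \Lra (2)$ and $(2) \Lra (3)$, then verify the explicit list of triangles, bound the number of corner-adjacency equivalence classes, and finally carry out the enumeration of isomorphism classes.

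For $(1) \Lra (2)$: an isomorphism trivial on subscripts preserves, for each line $L_i$, the linear order of its intersection points with the other lines (up to reversal). The two extreme points of this order on $L_i$ are precisely the two polygon corners incident to $L_i$, so the cyclic adjacency of lines around the outer polygon is preserved, and the anticlockwise gonality cycle starting at $L_1$ must agree in both arrangements. For the converse, given equal gonality cycles, the cycle determines combinatorially where on each side $L_{a_i}$ each other line $L_{a_j}$ crosses relative to the two corners of $L_{a_i}$, and the bijection $\gf(i)=i$ is then verified to be an isomorphism.

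For $(2) \Ra (3)$, the geometric heart of the theorem: I would enumerate triangular faces of the arrangement one corner of the outer polygon at a time. At each corner $V_i = L_{a_i}\cap L_{a_{i+1}}$, the two incident lines extend past $V_i$ into an exterior wedge; a triangular face sitting at the tip of this wedge is cut off by the first line whose extension crosses both legs. When two consecutive corners both lie inside one chain (indices $j,j+1 \leq r$ or $j \geq r+1$), the same cutter serves at both corners and produces the consecutive-triple triangle $\Gd L_{a_j}L_{a_{j+1}}L_{a_{j+2}}$. At the two chain-transition corners (one at position $r$, the other at the wrap-around from $a_n$ to $a_1$) the cutter on the two sides may differ, and this is precisely where the four conditional triangles $\Gd L_{a_1}L_{a_{n-1}}L_{a_n}$, $\Gd L_{a_1}L_{a_2}L_{a_n}$, $\Gd L_{a_{r+1}}L_{a_{r-1}}L_{a_r}$, $\Gd L_{a_{r+1}}L_{a_{r+2}}L_{a_r}$ arise, each of the four inequalities controlling whether the corresponding ``near'' line is the cutter that creates a triangular face. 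Exhaustiveness requires checking that any other triple has a fourth line piercing its interior, which follows from the cyclic monotonicity of the angles. For $(3) \Ra (2)$: the consecutive-triple triangles within a chain share two lines with the next, giving one corner-adjacency equivalence class per chain, and reading these triangles in order recovers each chain; the conditional transition triangles pin down the chain endpoints and hence the cycle.

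For the bound on equivalence classes and the isomorphism count: the two chains give at most two corner-adjacency classes, with the conditional transition triangles attaching to one of them, yielding the ``at most two'' bound. To count the isomorphism classes one parametrizes the valid cycles by the subset $S = \{a_2,\ldots,a_r\} \sbq \{2,\ldots,n\}$; the conditions $r \geq 2$, $r \leq n-1$, and $a_{r+1} < a_r$ translate to $S \neq \es$, $S \neq \{2,\ldots,n\}$, and $S$ not an initial segment $\{2,3,\ldots,k\}$ for any $k \in \{2,\ldots,n-1\}$ (each such segment would force the cycle to be $(1,2,\ldots,n)$, violating $a_{r+1}<a_r$), excluding exactly $n$ subsets from the $2^{n-1}$ available and leaving $2^{n-1}-n$. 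The principal obstacle I foresee is the exhaustiveness portion of $(2) \Ra (3)$: while locating the listed triangles is geometrically transparent, ruling out every other triple of lines---and getting the case analysis at the two chain-transition corners right under all combinations of the four inequalities---is the most delicate step.
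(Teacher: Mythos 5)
Your plan follows essentially the same route as the paper: the paper proves $(1)\Leftrightarrow(2)$ by recovering the intersection orders on each line from the gonality cycle (Proposition~\ref{prop:NomenclatureIsomorphism}), obtains the list of triangles from a subscript criterion for three consecutive sides of the convex $n$-gon (Proposition~\ref{prop:TriangleProperty}), and handles $(3)\Rightarrow(2)$ and the count $2^{n-1}-n$ just as you do (your subset parametrization is in fact more explicit than the paper's ``by counting''). The exhaustiveness step you flag as the principal obstacle is exactly the content of the paper's Proposition~\ref{prop:TriinLineArrGlobalCyclicity}: by fanning out from the corner vertices one sees that every bounded face other than the $n$-gon is a quadrilateral or a triangle edge-adjacent to the $n$-gon, so every triangle is formed by three consecutive sides and your corner-by-corner enumeration misses nothing.
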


\section{\bf{Definitions, Nomenclature and Main Theorem on Infinity Type Line Arrangements}}
In this section we mention the required definitions to state the second main theorem.
Here we define a line at infinity for a line arrangement.
\begin{defn}[Line at Infinity with respect to a line arrangement]
	\label{defn:LineatInfinity}
	~\\
	Let $\mcl{L}_n=\{L_1,L_2,\ldots,L_n\}$ be a line arrangement in the plane. We say that a line $L$ is a line at infinity with respect to $\mcl{L}_n$ if $\mcl{L}_n \cup \{L\}$ is a line arrangement and all the vertices, that is, zero dimensional intersections of the lines of the arrangement $\mcl{L}_n$ lie on ``one side" of $L$ (possibly the ``one side" includes the line $L$ if $L\in \mcl{L}_n$).
\end{defn}
Now we define an infinity type line arrangement.
\begin{defn}[Infinity Type Line Arrangement]
	\label{defn:InfinityTypeLineArrangement}
	~\\
	Let $\mcl{L}_n=\{L_1,L_2,\ldots,L_n\}$ be a line arrangement in the plane.
	We say that $\mcl{L}_n$ is an infinity type line arrangement if there exists a permutation $\gs\in S_n$ such that the line $L_{\gs(l)}$ is a line at infinity with respect to the arrangement 
	\equ{\{L_{\gs(1)},L_{\gs(2)},\ldots,L_{\gs(l-1)}\}, 2\leq l\leq n.}
	The permutation $\gs$ is said to be an infinity permutation of the line arrangement $\mcl{L}_n$. It need not be unique.
\end{defn}
\subsection{\bf{Nomenclature for an Infinity Type Line Arrangement}}
In this section we define a nomenclature to describe an infinity type line arrangement.
\subsubsection{\bf{Conventions and Fixing the Orientation of Lines}}
Let $L$ be a line in the plane. Let $\gth$ be the angle with respect to the positive $X\operatorname{-}$axis
with $0<\gth<\gp$. We translate the line and conventionally assume that the line meets the positive $X\operatorname{-}$axis. Now the line $L$ meets 
\begin{enumerate}
\item either the three quadrants IV,I,II when $\frac{\gp}{2}<\gth<\gp$,
\item or the three quadrants III,IV,I when $0<\gth<\frac{\gp}{2}$,
\item or only two quadrants IV,I when the line $L$ is parallel to $Y\operatorname{-}$axis.
\end{enumerate}
We give conventional orientation to the line $L$ in various cases $1,2,3$ according to the increasing $y\operatorname{-}$co-ordinate values on the line as
\begin{enumerate}
\item IV$\lra$ I$\lra$ II,
\item III$\lra$ IV$\lra$ I,
\item IV$\lra$ I.
 \end{enumerate}
If $\mcl{L}=\{L_1,L_2,\ldots,L_n\}$ is a line arrangement in the plane then by a suitable translation we assume that all the vertices of intersections lie in the first quadrant and all the lines of the arrangement intersect the positive $X\operatorname{-}$axis and they are all conventionally oriented.
\subsubsection{\bf{Nomenclature for a Triangle}}
\label{subsubsec:NT}
Consider three conventional oriented lines $L_i,L_j,L_k$ in the plane with $i,j,k\in \mbb{N}$ with respective angles $0<\gth_i<\gth_j<\gth_k<\gp,i<j<k$ where the angles are made with respect to the positive $X\operatorname{-}$axis and the vertices $L_i\cap L_j,L_j\cap L_k,L_i\cap L_k$ lie in the first quadrant. There are two possibilities as shown in Figure~\ref{fig:One}.
\begin{figure}[h]
	\centering
	\includegraphics[width = 0.8\textwidth]{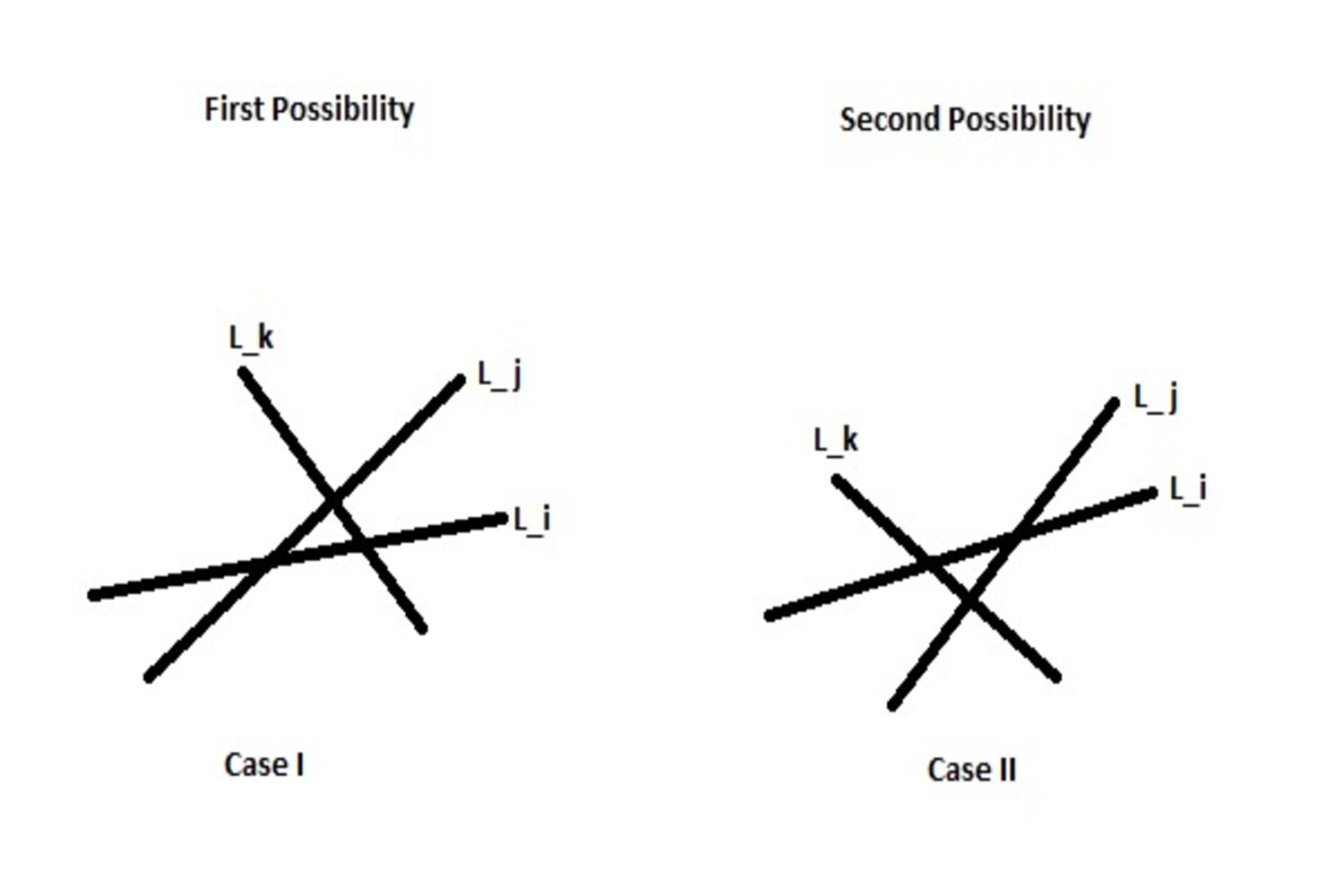}
	\caption{Two Possibilities for a Triangle $\Gd L_iL_jL_k$}
	\label{fig:One}
\end{figure}
The nomenclature here consists of the symbols $i,j,k$ in some order with superscripts for each, a $``+1"$ sign or a $``-1"$ sign. We give a $``+1"$ sign superscript to $k$ if the order of intersections on the oriented line $L_k$ is first $i$ then $j$ (as in Case I). Otherwise we give a $``-1"$ sign superscript (as in Case II).
We give a $``+1"$ sign superscript to $i$ if the order of intersections on the oriented line $L_i$ is first $j$ then $k$ (as in Case I). Otherwise we give a $``-1"$ sign superscript (as in Case II). We give a $``-1"$ sign superscript to $j$ if the order of intersections on the oriented line $L_j$ is first $i$ then $k$ (as in Case I). Otherwise we give a $``+1"$ sign superscript (as in Case II).
The following nomenclatures describe the triangle $\Gd L_iL_jL_K$ in Case I.
\equ{i^{+1}j^{-1}k^{+1},i^{+1}k^{+1}j^{-1},j^{-1}i^{+1}k^{+1},j^{-1}k^{+1}i^{+1},k^{+1}i^{+1}j^{-1},k^{+1}j^{-1}i^{+1}}
The following nomenclatures describe the triangle $\Gd L_iL_jL_K$ in Case II.
\equ{i^{-1}j^{+1}k^{-1},i^{-1}k^{-1}j^{+1},j^{+1}i^{-1}k^{-1},j^{+1}k^{-1}i^{-1},k^{-1}i^{-1}j^{+1},k^{-1}j^{+1}i^{-1}}
If, in addition, we fix the order of $i,j,k$ in any of the above then we have a unique nomenclature in both cases.

Now an equivalent criterion for the assignment of superscripts is given as follows.
We observe that if the line $L_k$ does not separate the origin and the vertex $L_i\cap L_j$ in two different half planes then a $``+1"$ superscript is attached and if it does then a $``-1"$ superscript is attached. Similarly for the line $L_j$, if it does not separate the origin and the vertex $L_i\cap L_k$ in two different half planes then a $``+1"$ superscript is attached and if it does then a $``-1"$ superscript is attached. Also similarly if the line $L_i$ does not separate the origin and the vertex $L_j\cap L_k$ in two different half planes then a $``+1"$ superscript is attached and if it does then a $``-1"$ superscript is attached.
With this equivalent criterion for the assignment of superscripts we give a nomenclature for an infinity type line arrangement.

\subsubsection{\bf{Nomenclature for an Infinity Type Line Arrangement}}
~\\
Let $\mcl{L}_n=\{L_1,L_2,\ldots,L_n\}$ be a conventional infinity type line arrangement in the plane with an infinity permutation $\gp$. A nomenclature is made up of certain symbols as follows. The order of the lines of the arrangement is given by $\gp(1)\gp(2)\gp(3)\ldots \gp(n)$. We associate the superscripts $``+1,-1"$ as follows. The lines $L_{\gp(1)},L_{\gp(2)},L_{\gp(3)}$ form a triangle of the arrangement as the permutation $\gp$ is an infinity permutation of the infinity type line arrangement $\mcl{L}_n$. So we use the nomenclature of the triangle in Section~\ref{subsubsec:NT} and assign superscripts $``+1,-1"$ signs to $\gp(1),\gp(2),\gp(3)$. Now for $l\geq 3$, if $L_{\gp(l)}$ does not separate the origin on one side and the vertices of intersections of the lines of the line arrangement $\{L_{\gp(1)},L_{\gp(2)},\ldots,L_{\gp(l-1)}\}$ on the other side then a $``+1"$ superscript is attached to $\gp(l)$. If it does separate then a $``-1"$ superscript is attached to $\gp(l)$. 

\begin{example}
Consider the seven line arrangements in Figure~\emph{\ref{fig:Zero}}. This line arrangement has a nomenclature of symbols as $1^{+1}2^{-1}3^{+1}7^{+1}6^{+1}4^{-1}5^{+1}$. It also has a nomenclature as $1^{+1}2^{-1}3^{+1}4^{-1}7^{+1}6^{+1}5^{+1}$.
\end{example}

\begin{note}[About Uniqueness of the Nomenclature]
~\\
As we have seen in general the nomenclature is not unique for an infinity type line arrangement though one such nomenclature always exists. The nomenclature is unique for a given infinity permutation. Even otherwise the nomenclature is unique in the following sense. We define uniquely an infinity permutation $\gs$ for $\mcl{L}_n$ as follows. Since there always exists a line at infinity for $\mcl{L}_n$ let $L_{\gs(n)}$ be the one with largest subscript $\gs(n)$. Then the following $(n-1)$-line arrangement $\{L_1,L_2,\ldots,L_n\}\bs\{L_{\gs(n)}\}$ is also an infinity type line arrangement. Now we pick a line $L_{\gs(n-1)}$ at infinity of the $(n-1)$-line arrangement having largest subscript $\gs(n-1)$ with $L_{\gs(n-1)}\in \{L_1,L_2,\ldots,L_n\}\bs\{L_{\gs(n)}\}$. Inductively we continue this process to define the infinity permutation $\gs$ uniquely and hence the nomenclature in this manner is uniquely obtained.
\end{note}
\subsection{\bf{The Second Main Theorem}}
Now we state the second main theorem of the article.
\begin{thmB}[Triangles of an Infinity Type Line Arrangement]
\namedlabel{theorem:TITLA}{B}
~\\
Let $\mcl{L}_n=\{L_1,L_2,\ldots,L_n\}$ be an infinity type line arrangement in the plane with an infinity permutation $\gp$ and nomenclature $\gp(1)^{a_1}\gp(2)^{a_2}\gp(3)^{a_3}\ldots \gp(n)^{a_n}$ where $a_i\in \{+1,-1\},1\leq i\leq n$. For $1\leq i<j<k\leq n$ the lines $L_{\gp(i)},L_{\gp(j)},L_{\gp(k)}$ form a triangle then there is a necessary condition to be satisfied which is as follows. 
\begin{itemize}
\item Necessary Condition: 
\begin{enumerate}[label=\emph{(\arabic*)}]
\item Either there are no integers in the set $\{\gp(1),\gp(2),\ldots,\gp(k)\}$
which are in between $\gp(i)$ and $\gp(j)$ 
\item or all integers in the set $\{\gp(1),\gp(2),\ldots,\gp(k)\}$
lie between $\gp(i)$ and $\gp(j)$ (including the end values).
\end{enumerate}
\end{itemize}
In addition to the necessary condition the lines $L_{\gp(i)},L_{\gp(j)},L_{\gp(k)}$ form a triangle of the line arrangement $\mcl{L}_n$ if and only if 
\begin{itemize}
\item $i=1,j=2,k=3$ or else,
\item  $(1)$ occurs and we should have 
\begin{enumerate}[label=\emph{(\alph*)}]
\item $a_k=sign\big(a_j(\gp(j)-\gp(i))(\gp(k)-\gp(j)\big)$
\item and for any $k>l>j, a_l=-sign\big(a_j(\gp(j)-\gp(i))(\gp(l)-\gp(j)\big)$.
\end{enumerate}
\item or else $(2)$ occurs and we should have $a_k=a_j$ and for any $k>l>j, a_l=-sign(a_j)$.
\end{itemize}
\equ{\boxed{Nomenclature: \ldots \gp(i)^{a_i}\ldots \gp(j)^{a_j}\ldots \gp(l)^{a_l} \ldots \gp(k)^{a_k}\ldots }}
\end{thmB}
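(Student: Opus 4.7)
The plan begins by restricting to $\{L_{\gp(1)}, \ldots, L_{\gp(k)}\}$. For each $l > k$, the infinity property of $\gp$ forces $L_{\gp(l)}$ to be at infinity with respect to $\{L_{\gp(1)}, \ldots, L_{\gp(l-1)}\}$; hence every vertex of that earlier arrangement, and in particular the three triangle vertices $L_{\gp(i)} \cap L_{\gp(j)}$, $L_{\gp(i)} \cap L_{\gp(k)}$, $L_{\gp(j)} \cap L_{\gp(k)}$, lies on one side of $L_{\gp(l)}$, so $L_{\gp(l)}$ cannot cross the triangle's interior. Thus the triangle is a face of $\mcl{L}_n$ exactly when it is a face of the prefix $\{L_{\gp(1)}, \ldots, L_{\gp(k)}\}$, and only the lines $L_{\gp(l)}$ with $l < k$, $l \neq i, j$, can possibly obstruct it. This reduction also dispatches the base case $i = 1, j = 2, k = 3$, for which there are no prior lines and any three general-position lines form a triangular face.

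\textbf{Necessary condition via the wedges at $V$.} Put $V = L_{\gp(i)} \cap L_{\gp(j)}$. Locally at $V$, the lines $L_{\gp(i)}, L_{\gp(j)}$ carve out four angular wedges: two \emph{middle} wedges, crossed by the unbounded rays of any line whose angle lies strictly between $\gth_{\gp(i)}$ and $\gth_{\gp(j)}$, and two \emph{outside} wedges, crossed by the rays of any line whose angle lies outside that interval. Because the conventional angular order gives $\gth_p < \gth_q \Leftrightarrow p < q$, ``$\gp(l)$ strictly between $\gp(i)$ and $\gp(j)$'' is exactly the condition for $L_{\gp(l)}$ to be of middle-angle type. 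The triangle sits in a single wedge $W$ at $V$, whose type (middle or outside) matches that of $\gp(k)$. A local analysis at $V$ shows that any prior line of the angular type \emph{opposite} to $\gp(k)$ is forced, by its infinity condition, to have its bounded cross-segment cut through $W$ exactly in the region between $V$ and $L_{\gp(k)}$, hence crossing the triangle. So every such $\gp(l)$ must be of the same angular type as $\gp(k)$, giving the dichotomy $(1)/(2)$ of the necessary condition.

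\textbf{Sign conditions and sufficiency.} Granted the necessary condition, the only lines still capable of cutting the triangle are $L_{\gp(l)}$ for $j < l < k$ together with $L_{\gp(k)}$ itself: the lines $L_{\gp(l)}$ with $l < j$ are automatically innocuous because the infinity condition at step $j$ places $V$ on a specific side of each of them, and this side is incompatible with entering the triangle. For the remaining lines, the sign $a_m$ tells us whether $L_{\gp(m)}$ separates the origin from the vertices accumulated at step $m$. I would translate this into four geometric bits: (i) the origin-side of $L_{\gp(j)}$, read off from $a_j$; (ii) which specific wedge at $V$ contains the triangle, determined by the sign of the integer product $(\gp(j)-\gp(i))(\gp(k)-\gp(j))$ via the angular ordering; (iii) which side of $L_{\gp(k)}$ contains $V$, read off from $a_k$; and (iv) similarly for each intermediate $L_{\gp(l)}$. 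Taking the product of these bits, the formula $a_k = \mathrm{sign}(a_j(\gp(j)-\gp(i))(\gp(k)-\gp(j)))$ is exactly the requirement that $L_{\gp(k)}$ truncates $W$ on the correct side of $V$ to form a bounded triangle, while $a_l = -\mathrm{sign}(a_j(\gp(j)-\gp(i))(\gp(l)-\gp(j)))$ is precisely the requirement that each $L_{\gp(l)}$ cuts $W$ on the far side of $L_{\gp(k)}$ and so misses the triangle. In case $(2)$, $\gp(i), \gp(j)$ are the extremal subscripts, so the integer product $(\gp(j)-\gp(i))(\gp(l)-\gp(j))$ has a constant sign, and the formulas collapse to $a_k = a_j$ and $a_l = -\mathrm{sign}(a_j)$.

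\textbf{Main obstacle.} The principal technical difficulty lies in the third stage: each of the four sign inputs is a binary choice, and verifying that their product agrees with the algebraic expressions $a_j(\gp(j)-\gp(i))(\gp(l)-\gp(j))$ across all relative orderings of $\gp(i), \gp(j), \gp(k)$ and both values of $a_j$ requires systematic casework. The clean algebraic product hides a subtle bookkeeping: one must pin down \emph{which} of the two outside wedges (in case (1)) or which of the two middle wedges (in case (2)) at $V$ contains the triangle, and then check that this matches exactly the sign of $(\gp(j)-\gp(i))(\gp(k)-\gp(j))$ under the conventional orientation of the lines.
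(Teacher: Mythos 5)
Your proposal follows essentially the same route as the paper's own proof: the reduction to the prefix $\{L_{\gp(1)},\ldots,L_{\gp(k)}\}$, the identification of $V=L_{\gp(i)}\cap L_{\gp(j)}$ as a corner vertex whose wedge/quadrant structure (the paper's Corner Lemma and Triangle Lemma) yields the necessary dichotomy $(1)/(2)$, and then sign casework over the relative order of $\gp(i),\gp(j),\gp(k),\gp(l)$ and the value of $a_j$ --- which is precisely the content of the paper's two tables. The casework you defer as the ``main obstacle'' is exactly what the paper executes by fixing a reference line $L_{\gp(t_0)}$ and tabulating which three quadrants at $V$ each later line must meet, so your plan is correct and matches the published argument.
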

\section{\bf{Proof of the First Main Theorem}}
In this section we first prove three Propositions~[\ref{prop:TriinLineArrGlobalCyclicity},\ref{prop:NomenclatureIsomorphism},\ref{prop:TriangleProperty}] which are required to prove the first main Theorem~\ref{theorem:TLAGC} later.

\begin{prop}
\label{prop:TriinLineArrGlobalCyclicity}
Let $\mcl{L}_n=\{L_1,\ldots,L_n\}$ be a conventional line arrangement in the plane with global cyclicity.
Then all the triangles that occur in the line arrangement are edge-adjacent to the convex $n\operatorname{-}$gon.
\end{prop}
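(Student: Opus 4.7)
The strategy is to obtain a purely combinatorial characterization of which triples of lines of $\mcl L_n$ can bound a triangular face, and then read off the edge-adjacency from this.

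First I would set up a structural observation on each line of the arrangement. Because $P$ is the intersection of the $n$ half-planes bounded by the lines of $\mcl L_n$, no line $L_{a_m}$ can cross the interior of $P$; hence on $L_{a_i}$ the segment from $V_i = L_{a_i}\cap L_{a_{i-1}}$ to $V_{i+1} = L_{a_i} \cap L_{a_{i+1}}$ (subscripts cyclic in the gonality cycle) is the $n$-gon edge $e_i$ and contains no other arrangement intersections, so all remaining intersections on $L_{a_i}$ lie on the two extensions beyond $V_i$ and $V_{i+1}$. A slightly more refined geometric argument, using convex position of the $L_{a_m}$, shows that traversing $L_{a_i}$ in one direction the $n-1$ intersections appear in the linear order
\equ{L_{a_{i-\lfloor (n-1)/2 \rfloor}},\,\ldots,\,L_{a_{i-1}},\,L_{a_{i+1}},\,\ldots,\,L_{a_{i+\lceil (n-1)/2 \rceil}},}
i.e.\ the gonality cycle read with position $i$ deleted (when $n$ is even, the opposite line $L_{a_{i+n/2}}$ may sit at either end depending on the geometry, but this does not affect what follows).

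Now let $T = \Gd L_{a_i}L_{a_j}L_{a_k}$ be a triangular face and denote by $A_1, A_2, A_3$ the three arcs into which $\{i, j, k\}$ partitions the gonality cycle ($A_1$ between $i$ and $j$ missing $k$, $A_2$ between $j$ and $k$ missing $i$, $A_3$ between $k$ and $i$ missing $j$). The edge of $T$ on $L_{a_i}$ runs from $L_{a_i} \cap L_{a_j}$ to $L_{a_i} \cap L_{a_k}$; for this edge to be free of other arrangement intersections, the set of positions linearly between $j$ and $k$ along $L_{a_i}$ must be empty. Reading this off from the structural lemma above forces: either $A_2 = \es$, or else both $A_1 = \es$ and $A_3 = \es$, the latter corresponding to the ``straddle'' case when $L_{a_j}$ and $L_{a_k}$ sit on opposite extensions of $L_{a_i}$, separated only by the already empty gap $e_i$. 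Imposing the analogous conditions on $L_{a_j}$ (giving $A_3 = \es$ or $A_1 = A_2 = \es$) and on $L_{a_k}$ (giving $A_1 = \es$ or $A_2 = A_3 = \es$), and doing a short Boolean case analysis on the triple $([A_1 = \es], [A_2 = \es], [A_3 = \es])$, one sees that for $n \geq 4$ exactly two of $A_1, A_2, A_3$ are forced to be empty. Thus $i, j, k$ are three cyclically consecutive positions in the gonality cycle.

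Labelling the middle cyclic position by $m$, so that the other two triangle-lines are $L_{a_{m-1}}$ and $L_{a_{m+1}}$, the two vertices of $T$ on $L_{a_m}$ are $V_m = L_{a_m}\cap L_{a_{m-1}}$ and $V_{m+1} = L_{a_m} \cap L_{a_{m+1}}$, so the edge of $T$ lying on $L_{a_m}$ is precisely the $n$-gon edge $e_m$. Hence $T$ shares the edge $e_m$ with $P$, proving edge-adjacency. The main obstacle is the structural lemma on the order of intersections along each line, as this is where global cyclicity (convex position) actually enters; somewhat delicate too is the straddle case in the Boolean analysis, where it is exactly the emptiness of $e_i$ that keeps $L_{a_j}$ and $L_{a_k}$ linearly adjacent on $L_{a_i}$ and forces both $A_1$ and $A_3$ to vanish.
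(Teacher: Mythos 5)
Your argument is correct and reaches the result by a genuinely different route from the paper's. The paper proceeds by a global decomposition of faces: every bounded region other than the convex $n\operatorname{-}$gon is attached to a corner vertex $L_{a_i}\cap L_{a_j}$, and the fan of regions cut out between $L_{a_i}$, $L_{a_j}$ and the interleaving polygon edges is observed to consist only of quadrilaterals and of triangles sitting edge-adjacent to the $n\operatorname{-}$gon. You instead argue line by line: you determine the order of the $n-1$ intersection points along each $L_{a_i}$, translate the statement that $\Gd L_{a_i}L_{a_j}L_{a_k}$ is a face into the consecutiveness of the two relevant intersection points on each of the three lines, and eliminate all but the cyclically consecutive triples by the Boolean analysis on the arcs $A_1,A_2,A_3$; the middle line then carries the shared polygon edge. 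Your route buys more: it delivers at once the ``three consecutive sides'' description that the paper only extracts later in Proposition~\ref{prop:TriangleProperty} and Theorem~\ref{theorem:TLAGC}, whereas the paper's fan decomposition additionally classifies the remaining bounded faces as quadrilaterals. One caution about your structural lemma, which you rightly identify as the crux: as literally stated it is too strong. The linear order on $L_{a_i}$ is indeed a linearization of the cyclic order $a_{i+1},\ldots,a_{i-1}$, but the cut need not occur at the antipodal position even for odd $n$; it occurs at the vertex of the polygon extremal in the direction normal to $L_{a_i}$, which can sit anywhere away from $e_i$ (already for a suitably skewed pentagon one obtains the order $L_{a_5},\,e_1,\,L_{a_2},\,L_{a_3},\,L_{a_4}$ along $L_{a_1}$). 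This does not damage your proof, since all you use is that linear adjacency on $L_{a_i}$ forces cyclic adjacency in the cycle with $i$ deleted, a fact independent of where the cut lies; but that cyclic consistency is exactly where convexity enters and deserves its short justification, e.g.\ via the monotone rotation of the directed side vectors around the polygon: consecutive sides of either of the two chains determined by $L_{a_i}$ share a polygon vertex, so their intercepts on $L_{a_i}$ are ordered by their angles, and the two chains land on opposite extensions of $e_i$.
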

\begin{proof}
	\begin{figure}[h]
	\centering
	\includegraphics[width = 0.8\textwidth]{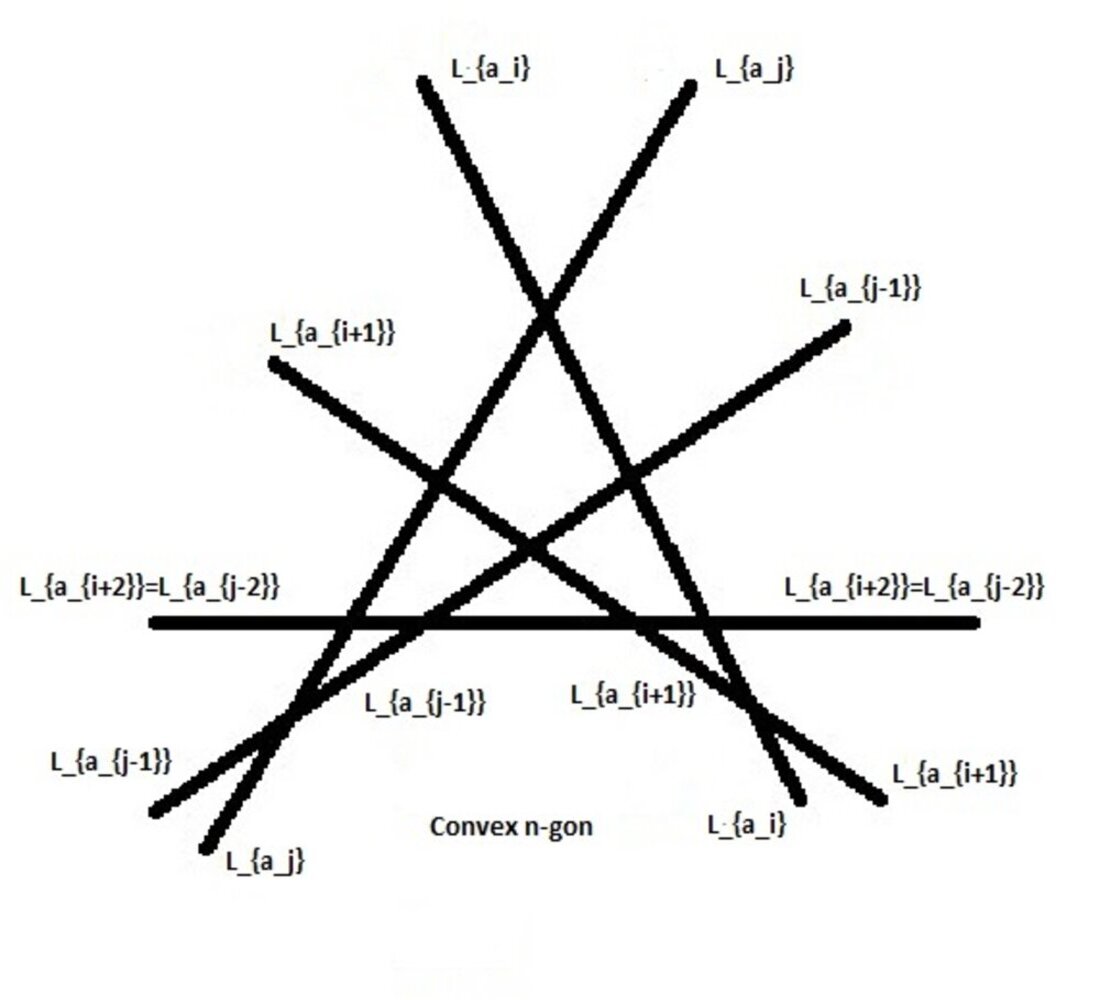}
	\caption{Illustration of Triangles Edge-Adjacent to the Convex $n\operatorname{-}$gon}
	\label{fig:Two}
\end{figure}
Let $(1=a_1a_2\ldots a_n)$ be the gonality cycle in the usual anti-clockwise order. Hence the convex $n\operatorname{-}$gon is given by $L_{a_1}\lra \ldots \lra L_{a_n}\lra L_{a_1}$ in the anti-clockwise manner. Let $P=L_{a_i}\cap L_{a_j},1\leq i<j\leq n$ be a corner vertex. Consider all the regions bounded by $L_{a_i},L_{a_j}$ and some of the edges of the convex $n\operatorname{-}$gon which are either 
\equ{L_{a_{i+1}},L_{a_{i+2}},\ldots, L_{a_{j-1}}} 
or 
\equ{L_{a_{j+1}},L_{a_{j+2}},\ldots, L_{a_n},L_{a_1},\ldots, L_{a_{i-1}}} 
depending on which side $L_{a_i}$ and $L_{a_j}$ meet. Now we observe that in this, the regions are either quadrilaterals or triangles and the triangles occur edge-adjacent to the convex $n\operatorname{-}$gon. For example consider Figure~\ref{fig:Two} for illustration.
Now all the bounded regions must occur in this manner for some corner vertex. Hence the bounded regions apart from the convex $n\operatorname{-}$gon are either quadrilaterals or triangles. This also proves the proposition that all the triangles that occur in the line arrangement $\mcl{L}_n$ are edge-adjacent to the convex $n\operatorname{-}$gon.
\end{proof}
\begin{prop}
\label{prop:NomenclatureIsomorphism}
Let $\mcl{L}^i_n=\{L^i_1,\ldots,L^i_n\},i=1,2$ be two conventional line arrangements in the plane with global cyclicity having gonality $n\operatorname{-}$cycles $\gs^i=(1=a^i_1a^i_2\ldots a^i_n)$, $i=1,2$ respectively. Then the bijection $\gf:\mcl{L}^1_n \lra \mcl{L}^2_n$, $\gf(L^1_j)=L^2_j,1\leq j\leq n$ is an isomorphism if and only if $\gs^1=\gs^2$. 
\end{prop}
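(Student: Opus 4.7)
The proof proceeds in two directions; the forward direction is routine, while the reverse relies on a combinatorial lemma about how extensions of the polygon's sides cross one another.

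For the forward direction, assume the subscript-trivial bijection $\gf(L^1_j) = L^2_j$ is an isomorphism. By Definition~\ref{defn:Iso}, the order of intersection vertices on $L^1_j$ matches that on $L^2_j$ (up to reversal), so $\gf$ preserves the combinatorial face structure (each face of a line arrangement is determined by the consecutive intersections on its boundary edges). Since each arrangement has global cyclicity, it contains a unique bounded face whose boundary lies on all $n$ lines --- the convex $n\operatorname{-}$gon (uniqueness follows from Proposition~\ref{prop:TriinLineArrGlobalCyclicity}, which forces every other bounded region to be a triangle or quadrilateral). These $n\operatorname{-}$gonal faces correspond under $\gf$, and reading their boundaries anticlockwise starting from the edge on $L_1$ yields the gonality cycles in each case. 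Therefore $\gs^1 = \gs^2$.

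For the reverse direction, suppose $\gs^1 = \gs^2 = \gs$. To verify $\gf$ is an isomorphism it suffices to show that on each line $L_{a_i}$ the order of intersections with the other lines is determined by $\gs$ alone. The two corner intersections, with $L_{a_{i-1}}$ and $L_{a_{i+1}}$, are read off directly from $\gs$. Each remaining intersection $L_{a_i} \cap L_{a_j}$ (for $j \notin \{i-1, i, i+1\}$) lies on the extension of $L_{a_i}$ beyond one of these two corners. The core claim is that both the side on which $L_{a_j}$ meets $L_{a_i}$ and the order along each such side are determined by $\gs$. This rests on a convexity argument: as one traverses the cycle anticlockwise, the edge directions of the $n\operatorname{-}$gon rotate monotonically through a total of $2\gp$, and the side on which the extension of $L_{a_j}$ meets $L_{a_i}$ is dictated by the position of $a_j$ relative to the angle wrap-around structure of $\gs$ (namely $1 = a_1 < \cdots < a_r$ and $a_{r+1} < \cdots < a_n$ with $1 < a_{r+1} < a_r$). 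Applying the claim to both arrangements identifies the intersection orders on $L^1_{a_i}$ and $L^2_{a_i}$, so $\gf$ is an isomorphism.

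The principal obstacle is establishing the core claim of the reverse direction: that the full combinatorial type of the arrangement --- not merely the $n\operatorname{-}$gon itself --- is forced by $\gs$. Proposition~\ref{prop:TriinLineArrGlobalCyclicity} already restricts the bounded regions outside the $n\operatorname{-}$gon to triangles and quadrilaterals edge-adjacent to the $n\operatorname{-}$gon, but this does not by itself pin down which line extensions meet which side of $L_{a_i}$ and in what order. Resolving this requires an interplay between the cyclic order encoded by $\gs$ and the compatible subscript angle ordering $\gth_1 < \gth_2 < \cdots < \gth_n$, together with the fan of triangles and quadrilaterals emanating from each corner vertex of the $n\operatorname{-}$gon --- that is the step where the geometric convexity of the configuration does the real work.
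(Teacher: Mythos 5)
Your overall strategy coincides with the paper's: the forward direction reads the gonality cycle off the distinguished convex $n\operatorname{-}$gon, and the converse reduces to showing that the order of intersection points on every line is forced by $\gs$ alone. The problem is that you stop exactly where the work has to be done. You isolate the ``core claim'' (that the side of $L_{a_i}$ on which each $L_{a_j}$ meets it, and the order along each side, are determined by $\gs$), assert that it follows from a convexity and monotone-rotation argument, and then state in your final paragraph that establishing it is the principal obstacle. Since that claim \emph{is} the content of the converse, the proposal as written is an outline whose central step is left unproven.

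For comparison, the paper closes this gap with two concrete devices. First, for the base line $L_1=L_{a_1}$ it writes the intersection order explicitly in terms of the normal form $1=a_1<a_2<\cdots<a_r$, $a_{r+1}<\cdots<a_n$, $1<a_{r+1}<a_r$, namely
\equ{L_{a_{r+1}}\cap L_1 \lra \cdots \lra L_{a_n}\cap L_1 \lra L_{a_2}\cap L_1 \lra \cdots \lra L_{a_r}\cap L_1,}
which is a purely combinatorial function of $\gs$. Second, instead of re-running a separate geometric argument on each line, it cyclically renumbers the lines so that $L_2$ plays the role of $L_1$; the renumbered arrangement again has global cyclicity with a gonality cycle in the same normal form for a new value of $r$, so the explicit formula applies, and reverting the labels gives the order on $L_2$. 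Iterating yields the order on every line, for both arrangements simultaneously. To complete your version you should either prove your core claim directly --- determining from the position of $a_j$ within the two monotone runs of $\gs$ which corner of $L_{a_i}$ the point $L_{a_i}\cap L_{a_j}$ lies beyond and where it sits in the order --- or adopt the paper's renumbering reduction so that only the single explicit formula for $L_1$ requires justification.
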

\begin{proof}
Suppose $\gf$ is an isomorphism then the combinatorial data of all the respective convex polygons in the arrangements agree and hence $\gs_1=\gs_2$. Now conversely if $\gs_1=\gs_2$ and further there is $2\leq r \leq n-1$ such that $1=a_1^i<\ldots<a_r^i;a_{r+1}^i<\ldots<a_n^i;1\leq a^i_{r+1}<a_r^i$ then we have the same order of intersections on the line $L^i_{a_1}=L^i_1,i=1,2$ and it is given by 
\equ{L^i_{a_{r+1}}\cap L^i_1 \lra \ldots \lra L^i_{a_n}\cap L^i_1 \lra L^i_{a_2}\cap L^i_1 \lra L^i_{a_r}\cap L^i_1.}
To obtain the order of intersections on the line $L_2$ we do the following. We cyclically renumber all the lines so that $L_2$ becomes $L_1$. Now we recover in a similar manner the order of intersections on the newly renumbered line $L_1$. This is because we get a similar gonality cycle with a new value of $r$. Then we revert back to old numbering to obtain the order of intersections on the line $L_2$. This way we continue till $L_n$ to obtain combinatorially the same order of intersections for $i=1,2$ on any two respective lines $L^i_j,i=1,2$ for $1\leq j\leq n$.
This shows that $\gf$ is an isomorphism and completes the proof of the proposition.
\end{proof}
\begin{prop}
\label{prop:TriangleProperty}
Let $n\geq 4$ be a positive integer and let $\mcl{L}_n=\{L_1,\ldots,L_n\}$ be a conventional line arrangement in the plane with global cyclicity. Let $L_a\lra L_b\lra L_c,1\leq a,b,c\leq n$ be three anti-clockwise juxtaposed sides of the convex $n\operatorname{-}$gon with $L_b$ the middle side. Then the three lines $L_a,L_b,L_c$ form a triangle of the arrangement if and only if $a>c>b$ or $b>a>c$ or $c>b>a$.
\end{prop}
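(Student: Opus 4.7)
My plan proceeds in three steps.

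First, I apply Proposition~\ref{prop:TriinLineArrGlobalCyclicity} to reduce to three consecutive polygon sides. Each line $L_i \in \mcl{L}_n$ is a supporting line of the convex $n$-gon, so an intersection $L_i \cap L_j$ with a non-adjacent polygon side $L_j$ cannot lie on the polygon-side segment of $L_i$ (else $L_i, L_j$ would share two boundary points of the convex polygon). Hence each polygon side is a single arrangement edge. Since Proposition~\ref{prop:TriinLineArrGlobalCyclicity} forces every triangle of the arrangement to be edge-adjacent to the $n$-gon, the shared edge must be an entire polygon side $L_b$, and the two lines through the endpoints of $L_b$ are precisely its two polygon neighbours $L_a, L_c$. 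Thus the candidate triangles are exactly those of the form $\Gd L_a L_b L_c$ for three anti-clockwise juxtaposed sides $L_a \to L_b \to L_c$, matching the setup of the proposition.

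Second, I derive the geometric intersection condition for such a candidate to be a face. Rotate coordinates so that $L_b$ is horizontal, the polygon lies above, and the anti-clockwise direction of $L_b$ points to the right. Let $\alpha_a < \alpha_b = 0 < \alpha_c$ be the oriented edge directions of $L_a, L_b, L_c$, so that the two exterior angles are $e_1 = -\alpha_a \in (0, \pi)$ and $e_2 = \alpha_c \in (0, \pi)$. Parametrize the extensions of $L_a$ from $P_1 = L_a \cap L_b$ and of $L_c$ from $P_2 = L_b \cap L_c$ into the outer (lower) half-plane and solve the resulting $2 \times 2$ linear system. Tracking signs reduces the existence of a positive solution (i.e.\ $L_a$ and $L_c$ meeting on the outer side of $L_b$) to the single inequality $\sin(\alpha_c - \alpha_a) > 0$, equivalently $e_1 + e_2 < \pi$. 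When this holds, the closed region on the outer side of $L_b$ bounded by $L_b, L_a, L_c$ is a bounded region edge-adjacent to the polygon, and by Proposition~\ref{prop:TriinLineArrGlobalCyclicity} it must be a triangular face of the arrangement.

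Third, I translate $e_1 + e_2 < \pi$ into the stated subscript condition. The oriented direction along the anti-clockwise cycle is $\alpha_{a_k} = \theta_{a_k}$ for positions $k \le r$ (first ascending run) and $\alpha_{a_k} = \theta_{a_k} + \pi$ for positions $r+1 \le k \le n$ (second ascending run), with a further $+2\pi$ shift upon cyclically wrapping past position $n$. For the triple $(a, b, c) = (a_j, a_{j+1}, a_{j+2})$ I case-split on which of the two cyclic wrap-points (between positions $r, r+1$ and between $n, 1$) falls within the triple; the possibilities are both $a_j, a_{j+1}, a_{j+2}$ in one run, or $j \in \{r-1, r, n-1, n\}$ for the single-wrap cases. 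In each case direct substitution shows $e_1 + e_2 < \pi$ is equivalent to the angle order $(\theta_a, \theta_b, \theta_c)$ being a cyclic shift of the subscript order $(a, b, c)$, which unpacks to the three inequalities $c > b > a$, $a > c > b$, $b > a > c$; every other ordering yields $e_1 + e_2 \ge \pi$. The exceptional configuration $r = n-1$, in which the single triple $(a_{n-1}, a_n, a_1)$ straddles both wrap-points, is handled separately: the formula gives $e_1 + e_2 = 2\pi - (\theta_{a_{n-1}} - \theta_1) > \pi$ automatically, consistent with the corresponding subscript pattern $c < b < a$ (not on the allowed list).

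I expect the trickiest part to be the case-enumeration bookkeeping in the third step, particularly handling the cyclic wrap at position $n$ and the degenerate configuration $r = n-1$; the geometric core reduces to the single sign computation for $\sin(\alpha_c - \alpha_a)$, while Step~1 is essentially a direct consequence of Proposition~\ref{prop:TriinLineArrGlobalCyclicity} together with the supporting-line property.
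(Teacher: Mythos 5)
Your proposal is correct, and it reaches the stated equivalence by a genuinely different route from the paper. The paper's proof rests on the orientation fact that in a conventional arrangement the triangle $\Delta L_iL_jL_k$ with $i<j<k$ is traversed clockwise as $L_i\lra L_j\lra L_k$; comparing this with the anti-clockwise traversal of the $n$-gon, the author then simply ``observes'' that a triangle sits outside the middle edge $L_b$ exactly when $(a,b,c)$ is a cyclic rotation of the sorted order, which unpacks to the three listed inequalities. You instead make the existence criterion quantitative: after the same reduction to three consecutive sides adjacent via the middle edge, you show the apex $Q=L_a\cap L_c$ lies on the outer side of $L_b$ iff the two exterior angles satisfy $e_1+e_2<\pi$, and then translate this through the two ascending runs of the gonality cycle (with the wrap at position $r$ and at position $n$, and the degenerate case $r=n-1$). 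Your bookkeeping checks out and has the side benefit of re-deriving the explicit triangle list of Theorem~\ref{theorem:TLAGC} case by case. The one step you should shore up is the claim that when $e_1+e_2<\pi$ the region $P_1P_2Q$ is actually a face: Proposition~\ref{prop:TriinLineArrGlobalCyclicity} as stated only says every triangle is edge-adjacent to the $n$-gon, not that this candidate region is uncut by the remaining lines. This is true and not hard --- since the face adjacent to the edge of $L_b$ is $T\cap\bigcap_{j\neq a,b,c}H_j$ with $H_j$ the supporting half-plane of $L_j$, it suffices to check $Q\in H_j$ for every other side $L_j$, which follows from a short computation with the outward normals (the direction angle of every other side lies in $(\delta_c,\delta_c+\pi)\cup(\delta_a+\pi,\delta_a+2\pi)$, so $Q$ is pushed inward relative to $P_2$ or to $P_1$) --- but it deserves a sentence; the paper's own proof is equally terse at exactly this point.
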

\begin{proof}
In a conventional line arrangement for any three lines $L_i,L_j,L_k$ with $1\leq i<j<k\leq n$, the orientation in general of the triangle $\Gd L_iL_jL_k$ is always clockwise with the orientation as given by $L_i\lra L_j\lra L_k\lra L_i$. Note that $\Gd L_iL_jL_k$ in general need not be a triangular region of the arrangement as it can have subdivisions into smaller regions. Now if the lines $L_a,L_b,L_c$ form a triangle of the arrangement then it is edge-adjacent via the middle edge $L_b$ to the convex $n\operatorname{-}$gon. We observe that if $L_a\lra L_b\lra L_c$ is anti-clockwise juxtaposed for the convex $n\operatorname{-}$gon then it forms a triangle  of the arrangement if and only if $a>c>b$ or $b>a>c$ or $c>b>a$. This proves the proposition.  
\end{proof}
Now we prove the first main Theorem~\ref{theorem:TLAGC}.
\begin{proof}
$(1)\Llra (2)$ follows from Proposition~\ref{prop:NomenclatureIsomorphism}. So $(1)$ or $(2)$ implies $(3)$ is immediate. Moreover from $(2)$, using Proposition~\ref{prop:TriangleProperty}, we can list the triangles in the isomorphic arrangements as given in the theorem. These are the only triangles of the isomorphic arrangements using Proposition~\ref{prop:TriinLineArrGlobalCyclicity}. It is also clear if a line arrangement has global cyclicity then there are at most two equivalence classes of triangles in the arrangement. Now we prove that $(3)$ implies $(2)$. Here the sets of triangles that arise from line arrangements with global cyclicity are same with the same combinatorial descriptions. We can read off the following three strings of inequalities for $i=1,2$ in a unique manner from the combinatorial description of triangles.
\begin{itemize}
	\item $1=a_1^i<a_2^i<\ldots<a_r^i$,
	\item $a_{r+1}^i<a_{r+2}^i<\ldots<a_n^i$,
	\item $1=a_1^i<a^i_{r+1}<a^i_{r}$.
\end{itemize}
From these we can obtain the same cycles $(1=a_1^ia_2^i\ldots a_n^i),i=1,2$. This proves $(3)\Ra (2)$. By counting we find that there are $2^{n-1}-n$ such gonality cycles. This completes the proof of the theorem. 
\end{proof}
\section{\bf{Corner Lemma and Triangle Lemma}}
In this section we prove two basic guiding lemmas which are very useful in the proof of the second main Theorem~\ref{theorem:TITLA}.

\begin{lemma}[Corner Lemma]
\label{lemma:Corner}
~\\
Consider the axes and two other lines $L,M$ giving rise to a four line arrangement in the plane.  Then the origin is a corner point of the four line arrangement if and only if $L$ and $M$, each meets the same set of three quadrants.
As a consequence the respective angles $\gth_L,\gth_M$ of the lines $L$ and $M$ make with respect to the positive $X\operatorname{-}$axis, both lie in $(0,\frac{\gp}{2})$ or both lie in $(\frac{\gp}{2},\gp)$. 
\end{lemma}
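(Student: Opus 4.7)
The plan is to translate the condition ``origin is a corner'' into a statement about the signs of the intercepts of $L$ and $M$ on the coordinate axes, show that matching sign patterns are equivalent to $L$ and $M$ meeting the same three quadrants, and then read off the conclusion about angles. Since the four lines form an arrangement, no two are parallel and no three are concurrent, so neither $L$ nor $M$ is parallel to an axis nor passes through the origin; each therefore has a well-defined nonzero $x$-intercept $a$ and nonzero $y$-intercept $b$, and can be written in intercept form as $x/a+y/b=1$.

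First I would observe that the origin is the intersection of the $X$-axis and the $Y$-axis, and on each of these two axes there are exactly three intersection vertices of the arrangement (one with the other axis and one each with $L$ and $M$). The origin is an endpoint of the three collinear intersection points on the $X$-axis precisely when the $x$-intercepts of $L$ and $M$ lie on the same side of $0$, i.e.\ have the same sign; symmetrically for the $Y$-axis. Hence the origin is a corner of the arrangement if and only if the sign pair $(\mathrm{sgn}\,a,\mathrm{sgn}\,b)$ agrees for $L$ and $M$.

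Next I would record the elementary bijection between the sign pair and the unique quadrant that the line $x/a+y/b=1$ avoids: the pair $(+,+)$ avoids Q3, the pair $(-,-)$ avoids Q1, the pair $(+,-)$ avoids Q2, and the pair $(-,+)$ avoids Q4. This is a one-line check by tracing the line through each of the four quadrants in turn. Consequently the sign pair determines, and is determined by, the set of three quadrants that the line meets, and matching sign pairs for $L,M$ is equivalent to $L,M$ meeting the same three quadrants. Combined with the previous paragraph this yields the main biconditional.

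Finally, the angle consequence is immediate: the slope of $x/a+y/b=1$ equals $-b/a$, so the pairs $(+,+)$ and $(-,-)$ both yield negative slope (angle $\gth\in(\gp/2,\gp)$) while $(+,-)$ and $(-,+)$ both yield positive slope (angle $\gth\in(0,\gp/2)$); matching sign pairs therefore force $\gth_L$ and $\gth_M$ to lie in the same open half-interval. I do not anticipate any genuine obstacle; the argument is essentially bookkeeping once the corner property is rephrased in terms of intercepts, with the only mildly subtle point being the sign-pair/avoided-quadrant correspondence.
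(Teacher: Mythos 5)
Your proof is correct and establishes the same underlying geometric fact the paper relies on; the difference is that the paper's own proof merely lists the four possible quadrant-triples (IV,I,II; I,II,III; II,III,IV; III,IV,I) and declares the equivalence and the angle consequence ``clear,'' whereas you actually supply the verification. Your reduction --- the origin is a corner iff the $x$-intercepts of $L$ and $M$ have the same sign and likewise the $y$-intercepts, combined with the bijection between the sign pair $(\mathrm{sgn}\,a,\mathrm{sgn}\,b)$ and the unique avoided quadrant --- is exactly the bookkeeping the paper omits, and your derivation of the angle dichotomy from the slope $-b/a$ is likewise complete: the pairs $(+,+)$ and $(-,-)$ give negative slope, hence $\theta\in(\frac{\pi}{2},\pi)$, while $(+,-)$ and $(-,+)$ give positive slope, hence $\theta\in(0,\frac{\pi}{2})$. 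You also correctly note where the arrangement hypotheses enter: no two lines parallel gives nonzero slopes, and no three concurrent gives nonzero intercepts and three distinct intersection points on each axis, which is what makes ``endpoint'' well defined. I see no gap; your writeup is in fact more rigorous than the one in the paper.
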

\begin{proof}
Since $L,M$ are not parallel to either of the axes, they meet three quadrants. The three quadrants can be any one of the following.
\begin{itemize}
	\item IV,I,II.
	\item I,II,III.
	\item II,III,IV.
	\item III,IV,I.
\end{itemize} 
Now it is clear that the origin is the corner point if and only if $L$ and $M$ meets the same set of three quadrants. The assertion about the angles is also clear.
\end{proof}
Now we prove another important lemma.
\begin{lemma}[Triangle Lemma]
\label{lemma:TriangleLemma}
~\\
Let $n\in \mbb{N}$. Consider the axes and finitely many lines $L_1,L_2,\ldots,L_n$ giving rise to a line arrangement consisting of $(n+2)$ lines.
\begin{enumerate}[label=\emph{(\arabic*)}]
\item Then the origin is a corner point if and only if each of the lines $L_i,1\leq i\leq n$ meets the same three quadrants. As a consequence all the angles of these lines with respect to the positive $X\operatorname{-}$axis lie in either the angle interval $(0,\frac{\gp}{2})$ or the angle interval $(\frac{\gp}{2},\gp)$.
\item Now suppose the origin is a corner point and let $L$ be a new line at infinity to the arrangement. 
\begin{enumerate}[label=\emph{(\alph*)}]
\item The axes and the line $L$ form a triangle if and only if the angle of $L$ lies in the same angle interval as that of lines $L_i, 1\leq i\leq n$ and the line $L$ does not meet the same set of three quadrants which all the lines $L_i,1\leq i\leq n$ meet.
\item In this scenario if we orient all the lines $L,L_i, 1\leq i\leq n$ according to the increasing $y\operatorname{-}$co-ordinate value then the line $L$ and the lines $L_i,1\leq i\leq n$ have opposite orders of intersections with axes.
\end{enumerate}
\end{enumerate}
\end{lemma}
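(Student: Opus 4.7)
For part (1), my plan is to leverage Lemma~\ref{lemma:Corner} pairwise. In the forward direction, if the origin is a corner of the full $(n+2)$-line arrangement, then for every pair $i\neq j$ the origin remains a corner of the four-line sub-arrangement consisting of the axes together with $L_i$ and $L_j$, since removing lines can only discard intersection points on the remaining ones and thus preserves the endpoint status of the origin on each axis. The Corner Lemma then forces $L_i$ and $L_j$ to meet a common triple of quadrants; letting $j$ vary, all $L_i$'s meet one and the same triple. The converse is a direct check, and the angle consequence follows because each of the four possible triples a line can meet determines the sign of its slope: the two triples containing both $\mathrm{IV}$ and $\mathrm{II}$ force angles in $(\gp/2,\gp)$, while the two containing both $\mathrm{III}$ and $\mathrm{I}$ force angles in $(0,\gp/2)$.

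For part (2)(a), let $Q_0$ denote the unique quadrant avoided by all the $L_i$'s, which exists by part (1). The triangle cut out by the two axes and $L$ sits in exactly one quadrant $Q^{\ast}$, determined by the signs of the intercepts $L\cap X$ and $L\cap Y$. By tabulating the four possible triples of quadrants $L$ can meet against its angle interval, the hypotheses of the statement (same angle interval as the $L_i$'s, different triple of quadrants) are equivalent to $Q^{\ast}=Q_0$. When $Q^{\ast}=Q_0$, the $L_i$'s avoid $Q_0$ and so cannot enter the triangle's interior, which is therefore a face. When $Q^{\ast}\neq Q_0$, the line-at-infinity hypothesis on $L$ becomes essential: it forces every prior vertex, in particular $L_i\cap X$ and $L_i\cap Y$, to lie strictly on the origin's side of $L$, hence in the interior of at least one of the two axis segments bounding the triangle. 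Consequently some $L_i$ enters the triangle and subdivides it, so it is not a face.

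For part (2)(b), I would orient every line by increasing $y$-coordinate and track the traversed sequence of quadrants. A line with negative slope crosses its triple from lower-right to upper-left, one with positive slope from lower-left to upper-right, and in either case the two axis crossings occur at the transitions between consecutive quadrants of this sequence, so their relative order is dictated by which quadrant sits in the middle of the triple. In the valid case of (2)(a), the $L_i$'s and $L$ share an angle interval but traverse complementary triples of quadrants, which forces their middle quadrants to differ, and a direct check in one representative configuration (say $L_i$'s meeting $\{\mathrm{IV},\mathrm{I},\mathrm{II}\}$ with middle $\mathrm{I}$, and $L$ meeting $\{\mathrm{II},\mathrm{III},\mathrm{IV}\}$ with middle $\mathrm{III}$) shows the axis crossings occur in opposite orders; the three remaining configurations follow by symmetry under the dihedral action on quadrants.

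The main obstacle is the bookkeeping over the finitely many quadrant configurations in part (2); the only genuinely geometric input beyond this tabulation is the line-at-infinity step in (2)(a), which sandwiches each $L_i\cap X$ and $L_i\cap Y$ strictly between the origin and the corresponding $L\cap X$ and $L\cap Y$ along the respective axes. Once this is secured, everything reduces to checking a single representative subcase and invoking symmetry.
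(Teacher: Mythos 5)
Your proof is correct and follows the same route as the paper: part (1) is reduced to the Corner Lemma applied pairwise, and parts (2)(a)--(b) are handled by the natural case analysis on quadrant triples. In fact the paper dismisses (2)(a) and (2)(b) as ``straightforward,'' so your argument --- in particular the observation that the triangle cut out by the axes and $L$ must occupy the quadrant avoided by all the $L_i$, and the use of the line-at-infinity hypothesis to force the intercepts of each $L_i$ inside the triangle's boundary segments otherwise --- supplies exactly the details the paper omits.
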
	
\begin{proof}
$(1)$ follows from Lemma~\ref{lemma:Corner}. The proof of $2(a)$ and $2(b)$ is also straight forward.
\end{proof}
\section{\bf{Proof of the Second Main Theorem}}
In this section we prove the second main Theorem~\ref{theorem:TITLA}.
\begin{proof}
Let $\gth_i$ be the angle made by the line $L_i$ with respect to the positive $X\operatorname{-}$axis for $1\leq i\leq n$. So we have $0<\gth_1<\gth_2<\ldots<\gth_n<\gp$. We prove the forward implication.
For $1\leq i<j<k\leq n$ the lines $L_{\gp(i)},L_{\gp(j)},L_{\gp(k)}$ form a triangle then $L_{\gp(i)}\cap L_{\gp(j)}$ is a corner vertex for the line arrangement $\{L_{\gp(1)},L_{\gp(2)},\ldots,$ $L_{\gp(k-1)}\}$. Now using Triangle Lemma~\ref{lemma:TriangleLemma} applied for the corner vertex $L_{\gp(i)}\cap L_{\gp(j)}$, we conclude that the angles $\gth_{\gp(t)}$ of the line $L_{\gp(t)}$ for $1\leq t\leq k$, all lie in between $\gth_{\gp(i)}$ and $\gth_{\gp(j)}$ or all lie in between $\max(\gth_{\gp(i)},\gth_{\gp(j)})$ and $\big(\gp+\min(\gth_{\gp(i)},\gth_{\gp(j)})\big)$ where are angles are considered congruent modulo $\gp$.  This gives the necessary condition that either there are no integers in the set $\{\gp(1),\gp(2),\ldots,\gp(k)\}$ which are in between $\gp(i)$ and $\gp(j)$ or all integers in the set $\{\gp(1),\gp(2),\ldots,\gp(k)\}$ lie between $\gp(i)$ and $\gp(j)$ (including the end values).

If $i=1,j=2,k=3$ then the theorem holds true. So assume that this is not the case. Hence there exists $t_0$ such that  $1\leq t_0<k,i\neq t_0\neq j$. If $i=1,j=2$ then we choose $t_0=3$. If $j>2$ then we choose $t_0<j,t_0\neq i$, say $t_0=1$ if $i\neq 1$ and $t_0=2$ if $i=1$. 

Consider Figure~\ref{fig:Three} where the quadrants are depicted in all cases. All the lines are oriented in the direction of increasing $y\operatorname{-}$co-ordinate values.
\begin{figure}[h]
\centering
\includegraphics[width = 1.0\textwidth]{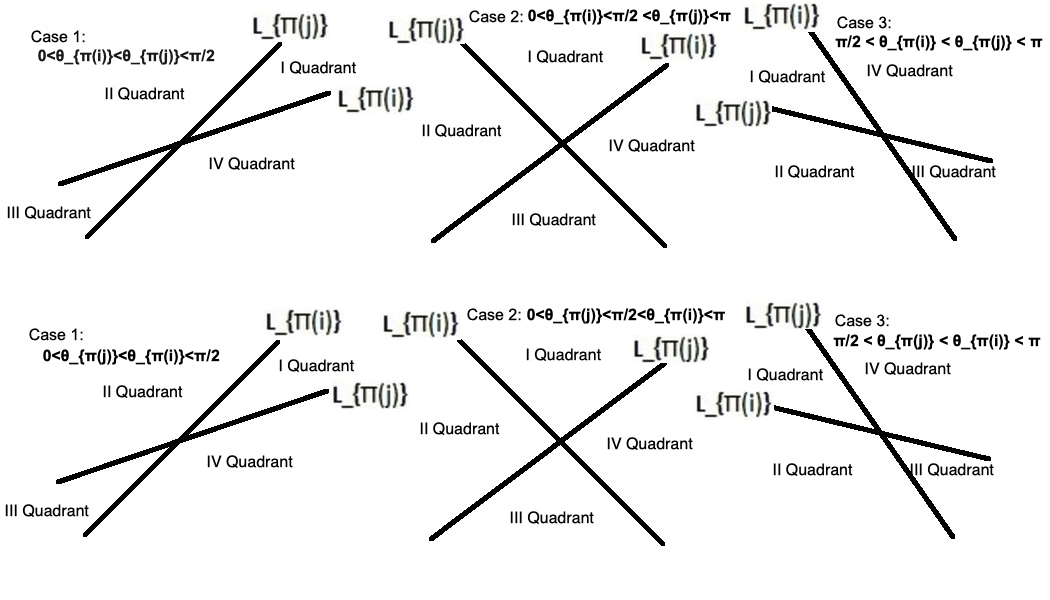}
\caption{Depiction of Quadrants with respect to Corner vertex $L_{\gp(i)}\cap L_{\gp(j)}$ where $L_{\gp(i)}, L_{\gp(j)}$ make respective angles $\gth_{\gp(i)},\gth_{\gp(j)}$ with respect to the positive X-axis, when $\gp(i)<\gp(j)$ cases 1,2,3 and when $\gp(j)<\gp(i)$ cases 1,2,3}
\label{fig:Three}
\end{figure}
We assume first that $\gp(t_0)$ does not lie in between $\gp(i)$ and $\gp(j)$ so that necessary condition $(1)$ occurs. The line $L_{\gp(t_0)}$ is used later in the proof as a reference line to obtain combinatorial data.
Now we observe the following given in a table.
\begin{center}
\begin{tabular}{ |c|c|c|c| } 
\hline	 Inequality & $a_j$ & Quads of $L_{\gp(l)}$, & $a_k$, Quads of $L_{\gp(k)}$ \\
& & $j<l<k,a_l$ & \\
\hline	 $\gp(i)<\gp(j)$ & +1 &  II,III,IV if & IV,I,II and\\
& & $\gp(l) \nin [\gp(i),\gp(j)]$. & $\gp(k) \nin [\gp(i),\gp(j)]$.\\
& & $\gp(l)<\gp(i)\Ra a_l=+1$ & $\gp(k)<\gp(i)\Ra a_k=-1$\\ 
& & $\gp(l)>\gp(j)\Ra a_l=-1$ & $\gp(k)>\gp(j)\Ra a_k=+1$\\
\hline	$\gp(i)<\gp(j)$ & -1 &  IV,I,II if & II,III,IV and\\  
& & $\gp(l) \nin [\gp(i),\gp(j)]$. & $\gp(k) \nin [\gp(i),\gp(j)]$.\\
& & $\gp(l)<\gp(i)\Ra a_l=-1$ & $\gp(k)<\gp(i)\Ra a_k=+1$\\ 
& & $\gp(l)>\gp(j)\Ra a_l=+1$ & $\gp(k)>\gp(j)\Ra a_k=-1$\\
\hline	$\gp(i)>\gp(j)$ & +1 & IV,I,II if & II,III,IV and\\ 
& & $\gp(l) \nin [\gp(j),\gp(i)]$. & $\gp(k) \nin [\gp(j),\gp(i)]$.\\
& & $\gp(l)<\gp(j)\Ra a_l=-1$ & $\gp(k)<\gp(j)\Ra a_k=+1$\\ 
& & $\gp(l)>\gp(i)\Ra a_l=+1$ & $\gp(k)>\gp(i)\Ra a_k=-1$\\
\hline	$\gp(i)>\gp(j)$ & -1 & II,III,IV if & IV,I,II and\\ 
& & $\gp(l) \nin [\gp(j),\gp(i)]$. & $\gp(k) \nin [\gp(j),\gp(i)]$.\\
& & $\gp(l)<\gp(j)\Ra a_l=+1$ & $\gp(k)<\gp(j)\Ra a_k=-1$\\ 
& & $\gp(l)>\gp(i)\Ra a_l=-1$ & $\gp(k)>\gp(i)\Ra a_k=+1$\\
\hline
\end{tabular}
\end{center} 
We mention the proof of one row of the above table. We consider only the case $\gp(i)<\gp(j),a_j=+1,\gp(l)>\gp(j),\gp(k)>\gp(j)$. The proof for the rest of the cases is similar. 

Now $a_j=+1$ implies that the line $L_{\gp(j)}$ does not separate the origin and the point $L_{\gp(t_0)}\cap L_{\gp(i)}$. Since $\gp(t_0)$ does not lie in between $\gp(i)$ and $\gp(j), L_{\gp(t_0)}$ meets the quadrants II,III,IV. Now $L_{\gp(i)}\cap L_{\gp(j)}$ is a corner point for the arrangement $\{L_{\gp(1)},L_{\gp(2)},\ldots,L_{\gp(k-1)}\}$. Hence we have for any $j<l<k$ the line $L_{\gp(l)}$ meets the same set of quadrants which $L_{\gp(t_0)}$ meets which is II,III,IV using Lemma~\ref{lemma:TriangleLemma}(1). So we have, if $\gp(l)>\gp(j)$ then the line $L_{\gp(l)}$ separates the origin and $L_{\gp(i)}\cap L_{\gp(j)}$. This implies that $a_l=-1$. The lines $L_{\gp(i)},L_{\gp(j)},L_{\gp(k)}$ form a triangle implies that $\gp(k)$ does not lie in between $\gp(i)$ and $\gp(j)$ and $L_{\gp(k)}$ has to meet the quadrants IV,I,II and does not meet III. Now if $\gp(k)>\gp(l)$ then the line $L_{\gp(k)}$ does not separate origin and $L_{\gp(i)}\cap L_{\gp(j)}$. This implies $a_k=+1$.

From the above table, we have proved that if the necessary condition $(1)$ occurs then we should have $a_k=sign\big(a_j(\gp(j)-\gp(i))(\gp(k)-\gp(j)\big)$ and for any $k>l>j, a_l=-sign\big(a_j$ $(\gp(j)-\gp(i))(\gp(l)-\gp(j)\big)$, it is just the exact opposite. The proof of the converse is also similar if $(1)$ holds as each step is reversible. 

Now if the necessary condition $(2)$ occurs then $\gp(t_0)$ lies in between $\gp(i)$ and $\gp(j)$.
The line $L_{\gp(t_0)}$ is again used later in the proof as a reference line to obtain combinatorial data.
Now we observe the following given in a table.
\begin{center}
\begin{tabular}{ |c|c|c|c| } 
\hline	 Inequality & $a_j$ & Quads of $L_{\gp(l)}$, & $a_k$, Quads of $L_{\gp(k)}$ \\
& & $j<l<k,a_l$ & \\
\hline	 $\gp(i)<\gp(j)$ & +1 &  I,II,III if & III,IV,I and\\
& & $\gp(l) \in [\gp(i),\gp(j)]$. & $\gp(k) \in [\gp(i),\gp(j)]$.\\
& & $a_l=-1$ & $a_k=+1$.\\
\hline	 $\gp(i)<\gp(j)$ & -1 &  III,IV,I if & I,II,III and\\
& & $\gp(l) \in [\gp(i),\gp(j)]$. & $\gp(k) \in [\gp(i),\gp(j)]$.\\
& & $a_l=1$ & $a_k=-1$.\\
\hline	 $\gp(i)>\gp(j)$ & +1 &  I,II,III if & III,IV,I and\\
& & $\gp(l) \in [\gp(j),\gp(i)]$. & $\gp(k) \in [\gp(j),\gp(i)]$.\\
& & $a_l=-1$ & $a_k=+1$.\\
\hline	 $\gp(i)>\gp(j)$ & -1 &  III,IV,I if & I,II,III and\\
& & $\gp(l) \in [\gp(j),\gp(i)]$. & $\gp(k) \in [\gp(j),\gp(i)]$.\\
& & $a_l=1$ & $a_k=-1$.\\
\hline
\end{tabular}
\end{center} 
We mention the proof of one row of the above table. We consider only the case $\gp(i)<\gp(j),a_j=+1$. The proof for the rest of the cases is similar. 

Now $a_j=+1$ implies that the line $L_{\gp(j)}$ does not separate the origin and the point $L_{\gp(t_0)}\cap L_{\gp(i)}$. Since $\gp(t_0)$ lies in between $\gp(i)$ and $\gp(j)$, $L_{\gp(t_0)}$ meets the quadrants I,II,III. Now $L_{\gp(i)}\cap L_{\gp(j)}$ is a corner point for the arrangement $\{L_{\gp(1)},L_{\gp(2)},\ldots,L_{\gp(k-1)}\}$. Hence we have for any $j<l<k$ the line $L_{\gp(l)}$ meets the same set of quadrants which $L_{\gp(t_0)}$ meets which is I,II,III using Lemma~\ref{lemma:TriangleLemma}(1). This implies that $a_l=-1$. The lines $L_{\gp(i)},L_{\gp(j)},L_{\gp(k)}$ form a triangle implies that $\gp(k)$ lies in between $\gp(i)$ and $\gp(j)$ and $L_{\gp(k)}$ has to meet the quadrants III,IV,I and does not meet II. So the line $L_{\gp(k)}$ does not separate the origin and $L_{\gp(i)}\cap L_{\gp(j)}$. This implies $a_k=+1$.

From the above table, we have proved that if the necessary condition $(2)$ occurs then we should have $a_k=a_j$ and for any $k>l>j, a_l=-a_j$, it is just the exact opposite. The proof of the converse is also similar if $(2)$ holds as each step is reversible.

This completes the proof of the second main theorem.
\end{proof}
\begin{example}
Using Theorem~\emph{\ref{theorem:TITLA}} we can conclude that the triangles in the seven line arrangement in Fig~\emph{\ref{fig:Zero}} with the nomenclature $1^{+1}2^{-1}3^{+1}7^{+1}6^{+1}4^{-1}5^{+1}$ is exactly the set 
$\{\{1,2,3\},\{1,2,4\},\{2,3,7\},\{1,6,7\},\{5,6,7\}\}$. Using Theorem~\emph{\ref{theorem:TITLA}}, in fact, we can write a function in a computer which outputs the set of all triangles in an infinity type line arrangement whose nomenclature is given as an input to the function. The mathematica code for this, is given at the end of the article in Section~\ref{sec:Mathematica}.
\end{example} 

\section{\bf{Examples of Two Infinity Type Arrangements with the Same Set of Triangles}}
We mention some counter examples where the sets of triangles in two line arrangements are same but the line arrangements are not isomorphic. The precise statement is as follows.
\begin{example}
Let $\mcl{L}^i_n=\{L^i_1,L^i_2,\ldots,L^i_n\},i=1,2$ be  two line arrangements with respective angles $0<\gth^i_1<\ldots<\gth^i_n<\gp$ for the lines $L^i_j,1\leq j\leq n,i=1,2$ respectively where the angles are made with respect to the positive $X\operatorname{-}$axis. Let $\mcl{T}^i=\{\{j,k,l\}\mid 1\leq j<k<l\leq n,L^i_j,L^i_k,L^i_l\text{ form a triangle in $\mcl{L}_n^i$ }\}$, $i=1,2$. Let $\gf:\mcl{L}_n^1\lra \mcl{L}_n^2$ be the bijection such that $\gf(L^1_j)=L^2_j,1\leq j\leq n$. Then we have 
\begin{enumerate}[label=\emph{(\arabic*)}]
\item If $\gf$ is an isomorphism then $\mcl{T}^1=\mcl{T}^2$.
\item The converse need not hold. If $\mcl{T}^1=\mcl{T}^2$ then $\gf$ need not be an isomorphism.
\end{enumerate} 
\end{example}
It is clear that $(1)$ holds and it is easy to verify that for $3\leq n\leq 5$ the converse also holds. However for $n\geq 6$ the converse is not true. The counter examples are given in Figure~\ref{fig:Four}.  
\begin{figure}[h]
	\centering
	\includegraphics[width = 1.0\textwidth]{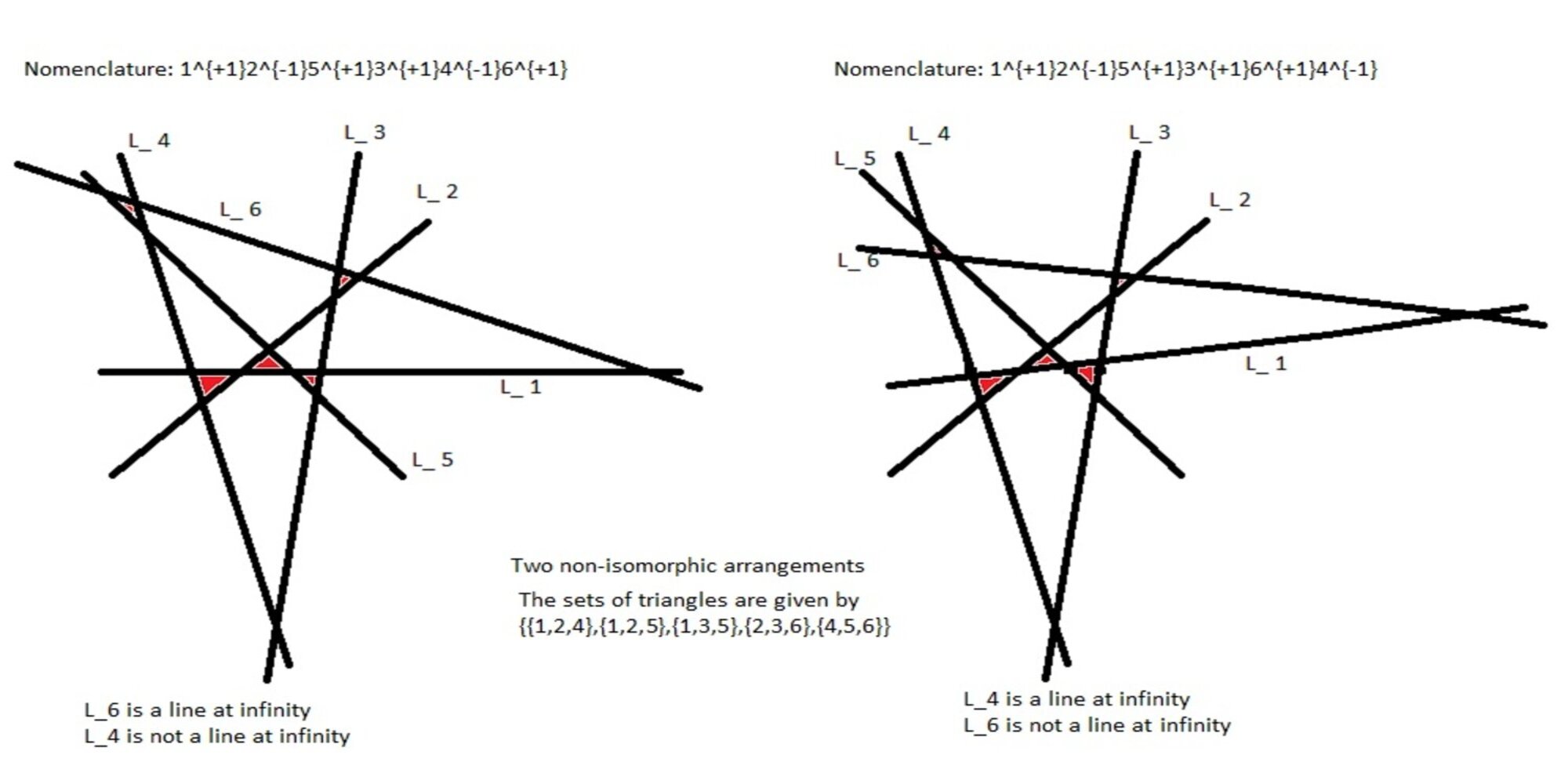}
	\caption{Two Six-Line Arrangements with the Same Set of Triangles}
	\label{fig:Four}
\end{figure}
Their nomenclatures are given by $1^{+1}2^{-1}5^{+1}3^{+1}4^{-1}6^{+1}$ and $1^{+1}2^{-1}5^{+1}3^{+1}6^{+1}4^{-1}$. They have the same sets of triangles, given by $\mcl{T}^1=\mcl{T}^2=\{\{1,2,4\},\{1,2,5\},\{1,3,5\},\{2,3,6\},\{4,5,6\}\}$. The map $\gf$ is not an isomorphism because in the first one $L_6$ is a line it infinity and in the second one $L_6$ is not a line at infinity.
\section{\bf{Characterization of a Line at Infinity from the Nomenclature of an Infinity Type Arrangement}}
In this section we characterize a line at infinity using the nomenclature.

The theorem is stated as follows.
\begin{theorem}
\label{theorem:LineAtInfinity}	
Let $\mcl{L}_n=\{L_1,L_2,\ldots,L_n\}$ be an infinity type line arrangement	with infinity permutation $\gp$ with nomenclature $\gp(1)^{a_1}\gp(2)^{a_2}\ldots \gp(n)^{a_n}$. Then for some $1\leq t< n, L_{\gp(t)}$ with symbol $\gp(t)^{+1}$ is a line at infinity to the arrangement $\mcl{L}_n$ if and only if the following conditions hold.
\begin{enumerate}[label=\emph{(\arabic*)}]
\item Let $t<n,a_t=+1$ and there exists $u,t< u\leq n$ with $a_u=+1$ and there is no $w,u<w\leq n$ such that $a_w=+1$ and $\gp(t)<\gp(u)$.
\begin{enumerate}[label=\emph{(\Alph*)}]
\item Here all the symbols after $\gp(t)$ with $+1$ superscript are more than $\gp(t)$ and they increase as we move to the right.
\item All the symbols which occur after $\gp(t)$ with $-1$ superscript are less than all the symbols which occur after $\gp(t)$ with $+1$ superscript.
\item All the symbols which occur before $\gp(t)$ are more than $\gp(t)$ and lie in between those symbols which occur after $\gp(t)$ with $-1$ superscript and those symbols which occur after $\gp(t)$ with $+1$ superscript.
\item All the symbols after $\gp(t)$ and before $\gp(u)$ with $-1$ superscript are more than $\gp(t)$ and decrease as we move to the right. 
\item All the symbols after $\gp(u)$ have $-1$ superscript and they can be more than or less than $\gp(t)$. Among them those symbols which are less than $\gp(t)$ increase as we move to the right.   
Among them, those symbols which are more than $\gp(t)$ decrease as we move to the right and are smaller than those symbols with $-1$ superscript which are in between $\gp(t)$ and $\gp(u)$.
\end{enumerate}
\item Let $t<n,a_t=+1$ and there does not exist $u,t< u\leq n$ with $a_u=+1$.
\begin{enumerate}[label=\emph{(\roman*)}]
\item All the symbols after $\gp(t)$ have $-1$ superscript and they can be more than or less than $\gp(t)$. Among them those symbols which are less than $\gp(t)$ increase as we move to the right.   
Among them, those symbols which are more than $\gp(t)$ decrease as we move to the right.
\item All the symbols which occur before $\gp(t)$ and which are more than $\gp(t)$ are greater than all the symbols which occur after $\gp(t)$.
\item All the symbols which occur before $\gp(t)$ and which are less than $\gp(t)$ are lesser than all the symbols which occur after $\gp(t)$.
\end{enumerate}
\item Let $t<n,a_t=+1$ and there exists $u,t< u\leq n$ with $a_u=+1$ and there is no $w,u<w\leq n$ such that $a_w=+1$ and $\gp(u)<\gp(t)$.
\begin{enumerate}[label=\emph{(\alph*)}]
\item Here all the symbols after $\gp(t)$ with $+1$ superscript are less than $\gp(t)$ and they decrease as we move to the right.
\item All the symbols which occur after $\gp(t)$ with $-1$ superscript are more than all the symbols which occur after $\gp(t)$ with $+1$ superscript.
\item All the symbols which occur before $\gp(t)$ are less than $\gp(t)$ and lie in between those symbols which occur after $\gp(t)$ with $+1$ superscript and those symbols which occur after $\gp(t)$ with $-1$ superscript.
\item All the symbols after $\gp(t)$ and before $\gp(u)$ with $-1$ superscript are less than $\gp(t)$ and increase as we move to the right. 
\item All the symbols after $\gp(u)$ have $-1$ superscript and they can be more than or less than $\gp(t)$. Among them those symbols which are less than $\gp(t)$ increase as we move to the right.   
Among them, those symbols which are more than $\gp(t)$ decrease as we move to the right and are bigger than those symbols with $-1$ superscript which are in between $\gp(t)$ and $\gp(u)$.
\end{enumerate}
\end{enumerate}
\end{theorem}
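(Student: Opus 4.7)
Since $a_t=+1$, the nomenclature already asserts that $L_{\gp(t)}$ does not separate the origin from any vertex of the sub-arrangement $\{L_{\gp(1)},\ldots,L_{\gp(t-1)}\}$, so all those vertices lie on the origin side $H_+$ of $L_{\gp(t)}$. Hence the question reduces to: when do the subsequent lines $L_{\gp(t+1)},\ldots,L_{\gp(n)}$ generate only new vertices inside $H_+$? The plan is to translate this single geometric requirement, analysed inductively on $l$, into the three combinatorial patterns of the theorem.

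For each $l>t$, set $R_l:=L_{\gp(l)}\cap L_{\gp(t)}$. The first reduction is that every new vertex lies in $H_+$ if and only if for every $l>t$ the point $R_l$ is extreme along $L_{\gp(l)}$ in its conventional orientation: every other intersection $L_{\gp(l)}\cap L_{\gp(s)}$ with $s<l$, $s\neq t$, lies on a single ray out of $R_l$. Lemma~\ref{lemma:TriangleLemma} then pins down which triple of quadrants $L_{\gp(l)}$ meets relative to $L_{\gp(t)}$, and the angular convention $0<\gth_1<\cdots<\gth_n<\gp$ converts the extremity constraint into a specific comparison rule between $\gp(l)$ and $\gp(t)$. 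Separately, by the defining rule of the nomenclature, $a_l=+1$ asserts that $L_{\gp(l)}$ does not separate the origin from the cluster of prior vertices, which places $R_l$ beyond that cluster on $L_{\gp(l)}$; the case $a_l=-1$ is the opposite. Combining the two translations converts the geometric requirement into monotonicity and comparison rules on both the indices $\gp(l)$ and the signs $a_l$.

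The three cases split according to the structure of the later $+1$-signs: case (1) forces all subsequent $+1$-lines to have index $>\gp(t)$, case (3) is its mirror with index $<\gp(t)$, and case (2) is the degenerate situation with no subsequent $+1$-line. In each case the argument is an induction on $l$ from $t+1$ to $n$, maintaining the invariant that $L_{\gp(t)}$ remains a line at infinity of $\{L_{\gp(1)},\ldots,L_{\gp(l)}\}$; the constraints read off step by step are precisely (A)--(E), (i)--(iii), (a)--(e), and the reverse implication runs the same induction with those combinatorial rules as input. The main obstacle is the bookkeeping in the latter part of cases (1) and (3), specifically conditions (E) and (e): after the distinguished index $u$, the remaining symbols split into two monotone runs depending on whether they sit above or below $\gp(t)$, and keeping track cleanly probably requires rotating coordinates so that $L_{\gp(t)}$ is horizontal and viewing the crossings $R_l$ as an ordered point set on $L_{\gp(t)}$ indexed both by the angle of $L_{\gp(l)}$ and by which side of the projected origin they lie on.
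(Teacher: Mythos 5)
Your overall strategy is the same in spirit as the paper's: interpret the superscripts through the ``does $L_{\gp(l)}$ separate the origin from the earlier vertices'' criterion, use the Corner/Triangle Lemmas to control which quadrants the later lines meet relative to the relevant corner, and assemble the resulting order constraints on the symbols. But there is a genuine gap: you never perform the step that constitutes essentially all of the content of the theorem, namely the translation of the geometric data into the specific inequality chains of (1)(A)--(E), (2)(i)--(iii), (3)(a)--(e). The paper does this by enumerating twelve sub-symbol patterns $\ldots \gp(t)^{b_t}\ldots \gp(s)^{b_s}\ldots \gp(u)^{\pm 1}\ldots$ (and their twelve opposites), classified by the positions of $t,s,u$ in the word and the relative sizes of $\gp(t),\gp(s),\gp(u)$, according to whether $L_{\gp(t)}$ separates the origin from $L_{\gp(s)}\cap L_{\gp(u)}$; it then works through four sign-pattern cases (a)--(d), intersecting the admissible patterns over all pairs of positions to obtain explicit chains such as $\gp(t)<\min\{\gp(p),\gp(q)\}<\max\{\gp(p),\gp(q)\}<\gp(s)<\gp(u)$, from which (A)--(E) and their analogues are read off. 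Your sketch replaces all of this with the assertion that the constraints ``read off step by step are precisely (A)--(E), (i)--(iii), (a)--(e)'', and your closing admission that the bookkeeping for (E) and (e) ``probably requires'' a further coordinate rotation confirms that the derivation has not been carried out. Since the theorem \emph{is} that list of inequalities, this is not a routine omission; note in particular that the conditions compare pairs of symbols $\gp(l),\gp(l')$ with $l,l'>t$ and symbols occurring before $\gp(t)$, whereas your reduction only speaks of comparisons between $\gp(l)$ and $\gp(t)$.

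Two smaller points. First, your opening reduction is stated as an ``if and only if'' but the right-hand side is too weak: requiring only that the intersections $L_{\gp(l)}\cap L_{\gp(s)}$, $s<l$, $s\neq t$, lie on a single ray out of $R_l=L_{\gp(l)}\cap L_{\gp(t)}$ does not by itself force that ray to enter the origin-side half-plane of $L_{\gp(t)}$; you must specify the ray, and the consistency argument across lines sharing a vertex that repairs this needs $t\geq 3$ so that old vertices pin down the side. Second, for $t\leq 3$ the sign $a_t$ is assigned by the triangle nomenclature of $\gp(1)\gp(2)\gp(3)$ rather than by the ``separates the earlier vertices'' rule, so your first sentence does not literally apply there and those cases require separate (easy) treatment.
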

We prove this theorem after the following two notes and an example.
\begin{note}
In the nomenclature $\gp(1)^{a_1}\gp(2)^{a_2}\ldots \gp(t)^{+1}\ldots\gp(n)^{a_n}$ the symbols after $\gp(t)$ with $-1$ superscript approach $\gp(t)$ that is those which are more than $\gp(t)$ decrease and those which are less than $\gp(t)$ will increase as we move to the right and the symbols after $\gp(t)$ with $+1$ superscript go far from $\gp(t)$ as we move to the right if $L_{\gp(t)}$ is a line at infinity.
\end{note}
\begin{note}
Let $\mcl{L}_n=\{L_1,L_2,\ldots,L_n\}$ be an infinity type line arrangement	with infinity permutation $\gp$ with nomenclature $\gp(1)^{a_1}\gp(2)^{a_2}\ldots \gp(n)^{a_n}$. Then for some $1\leq t< n, L_{\gp(t)}$ with symbol $\gp(t)^{-1}$ is a line at infinity to the arrangement $\mcl{L}_n$ if and only if
$L_{\gp(t)}$ with symbol $\gp(t)^{+1}$ is a line at infinity to the arrangement $\ti{\mcl{L}}_n=\{\ti{L}_1,\ti{L}_2,\ldots,\ti{L}_n\}$ with nomenclature $\gp(1)^{-a_1}\gp(2)^{-a_2}\ldots \gp(n)^{-a_n}$. Now from this we can infer the inequalities of symbols occurring in the nomenclature using Theorem~\emph{\ref{theorem:LineAtInfinity}}.
\end{note}
We illustrate this theorem via some examples before actually proving it.
\begin{example}
Suppose the nomenclature is $\gp(1)^{a_1}\gp(2)^{a_2}\gp(3)^{+1}\gp(4)^{+1}\gp(5)^{+1} \gp(6)^{-1}$  $\gp(7)^{-1}\gp(8)^{+1}\gp(9)^{+1}\gp(10)^{-1}\gp(11)^{-1}\gp(12)^{-1}\gp(13)^{-1}$ with $\gp(3)<\gp(9)$, $\gp(3)<\gp(10),\gp(3)<\gp(12),\gp(11)<\gp(3),\gp(13)<\gp(3)$.
Then using Theorem~\ref{theorem:LineAtInfinity}, the line $L_{\gp(3)}$ is a line at infinity if and only if we have 
\equa{&\gp(11)<\gp(13)<\gp(3)<\gp(12)<\gp(10)<\gp(7)<\gp(6)<\min\{\gp(1),\gp(2)\}<\\
	&\max\{\gp(1),\gp(2)\}<\gp(4)<\gp(5)<\gp(8)<\gp(9).}
We get $\gp(11)<\gp(13)<\gp(3)<\gp(12)<\gp(10)<\gp(7)<\gp(6)$ using condition \emph{(1):(D),(E)}. We get 
$\gp(4)<\gp(5)<\gp(8)<\gp(9)$ using condition \emph{(1):(A)} and we get $\gp(6)<\min\{\gp(1),\gp(2)\}<\max\{\gp(1),\gp(2)\}<\gp(4)$ using condition \emph{(1):(B),(C)}.
Here we have $t=3,n=13,u=9$. Since $\gp$ is a permutation we have $\gp(9)=13,\gp(8)=12,\gp(5)=11,\gp(4)=10,\{\gp(1),\gp(2)\}=\{8,9\},\gp(6)=7,\gp(7)=6,\gp(10)=5,\gp(12)=4,\gp(3)=3,\gp(13)=2,\gp(11)=1$. Hence $\gp=(1,8,12,4,10,5,11)(2,9,13)$ or $\gp=(1,9,13,2,8,12,4,10,5,11)$.
The corresponding line arrangements are respectively  $8^{a_1}9^{a_2}3^{+1}{10}^{+1}{11}^{+}7^{-1}6^{-1}{12}^{+1}$ ${13}^{+1}5^{-1}1^{-1}4^{-1}2^{-1}$ and $9^{a_1}8^{a_2}3^{+1}{10}^{+1}{11}^{+}7^{-1}6^{-1}{12}^{+1}{13}^{+1}5^{-1}1^{-1}4^{-1}2^{-1}$.

Suppose the nomenclature is $\gp(1)^{a_1}\gp(2)^{a_2}\gp(3)^{+1}\gp(4)^{+1}\gp(5)^{+1}\gp(6)^{-1}\gp(7)^{-1}\gp(8)^{+1}$  $\gp(9)^{+1}\gp(10)^{-1}\gp(11)^{-1}\gp(12)^{-1}\gp(13)^{-1}$ with $\gp(9)<\gp(3),\gp(3)<\gp(10)$, $\gp(3)<\gp(12),\gp(11)<\gp(3),\gp(13)<\gp(3)$.
Then the line $L_{\gp(3)}$ is a line at infinity if and only if we have 
\equa{&\gp(9)<\gp(8)<\gp(5)<\gp(4)<\min\{\gp(1),\gp(2)\}<\max\{\gp(1),\gp(2)\}<\gp(6)<\\
& \gp(7)<\gp(11)<\gp(13)<\gp(3)<\gp(12)<\gp(10).} So we have $\gp(10)=13,\gp(12)=12,\gp(3)=11,\gp(13)=10,\gp(11)=9,\gp(7)=8,\gp(6)=7,\{\gp(1),\gp(2)\}=\{5,6\},\gp(4)=4,\gp(5)=3,\gp(8)=2,\gp(9)=1$. Hence $\gp=(1,5,3,11,9)(2,6,7,8)(10,13)$ or $\gp=(1,6,7,8,2,5,3,11,9)(10,13)$. The corresponding line arrangements are respectively 
$5^{a_1}6^{a_2}11^{+1}4^{+1}3^{+1}7^{-1}8^{-1}$ $2^{+1}1^{+1}13^{-1}9^{-1}$ $12^{-1}10^{-1}$ and $6^{a_1}5^{a_2}11^{+1}4^{+1}3^{+1}7^{-1}8^{-1}2^{+1}1^{+1}13^{-1}9^{-1}12^{-1}10^{-1}$.

Theorem~\emph{\ref{theorem:LineAtInfinity}} can be used to conclude that a certain line is not a line at infinity as follows. Consider the line arrangement $1^{+1}2^{-1}5^{+1}3^{+1}4^{-1}6^{+1}$ in Figure~\emph{\ref{fig:Four}}. The line $L_4$ is not a line at infinity for this arrangement. We conclude this as follows. Corresponding to this line arrangement we consider another one given by $1^{-1}2^{+1}5^{-1}3^{-1}4^{+1}6^{-1}$. In this new arrangement after symbol $4$ only the symbol $6$ appears with $-1$ superscript. Now we want that all the symbols to the left of $4$ which are more than $4$ must be more than $6$ using condition \emph{(2):(ii)}. This is not true since $5$ occurs before $4$ and more than $4$ but not more than $6$. So $L_4$ is not a line at infinity.

Theorem~\emph{\ref{theorem:LineAtInfinity}} can be used to conclude that a certain line is a line at infinity as follows. For the arrangement $1^{+1}2^{-1}3^{+1}7^{+1}6^{+1}4^{-1}5^{+1}$ in Figure~\emph{\ref{fig:Zero}} we consider the corresponding arrangement $1^{-1}2^{+1}3^{-1}7^{-1}6^{-1}4^{+1}5^{-1}$. Now $L_4$ is a line at infinity because the symbols which occur before $4$ and which are more than $4$ are actually more than $6$ and those symbols which occur before $4$ and which are less than $4$ are actually less than $6$. Hence $L_4$ is a line at infinity for the arrangement in Figure~\emph{\ref{fig:Zero}}.
\end{example}
Now we prove Theorem~\ref{theorem:LineAtInfinity}.
\begin{proof}
If $t=n$ then $L_{\gp(n)}$ is a line at infinity to the line arrangement. So assume $t\neq n$, that is, $\gp(t)$ does not occur at the end of the nomenclature. 
Consider the following list of sub-symbols containing $\gp(t)$ that can occur in any nomenclature in this scenario when $t\neq n$.
\begin{enumerate}[label=(\arabic*)]
\item $\ldots \gp(t)^{b_t}\ldots \gp(s)^{b_s}\ldots \gp(u)^{+1}\ldots $ for $t<s<u$ and $\gp(t)<\gp(s)<\gp(u)$.
\item $\ldots \gp(s)^{b_s}\ldots \gp(t)^{b_t} \ldots \gp(u)^{+1}\ldots$ for $s<t<u$ and $\gp(t)<\gp(s)<\gp(u)$.
\item $\ldots \gp(t)^{b_t}\ldots \gp(s)^{b_s} \ldots \gp(u)^{-1}\ldots$ for $t<s<u$ and $\gp(t)<\gp(u)<\gp(s)$.
\item $\ldots \gp(s)^{b_s}\ldots \gp(t)^{b_t} \ldots \gp(u)^{-1}\ldots$ for $s<t<u$ and $\gp(t)<\gp(u)<\gp(s)$.
\item $\ldots \gp(t)^{b_t}\ldots \gp(s)^{b_s} \ldots \gp(u)^{-1}\ldots$ for $t<s<u$ and $\gp(u)<\gp(t)<\gp(s)$.
\item $\ldots \gp(s)^{b_s}\ldots \gp(t)^{b_t} \ldots \gp(u)^{-1}\ldots$ for $s<t<u$ and $\gp(u)<\gp(t)<\gp(s)$.
\item $\ldots \gp(t)^{b_t}\ldots \gp(s)^{b_s} \ldots \gp(u)^{-1}\ldots$ for $t<s<u$ and $\gp(s)<\gp(t)<\gp(u)$.
\item $\ldots \gp(s)^{b_s}\ldots \gp(t)^{b_t} \ldots \gp(u)^{-1}\ldots$ for $s<t<u$ and $\gp(s)<\gp(t)<\gp(u)$.
\item $\ldots \gp(t)^{b_t}\ldots \gp(s)^{b_s} \ldots \gp(u)^{+1}\ldots$ for $t<s<u$ and $\gp(u)<\gp(s)<\gp(t)$.
\item $\ldots \gp(s)^{b_s}\ldots \gp(t)^{b_t} \ldots \gp(u)^{+1}\ldots$ for $s<t<u$ and $\gp(u)<\gp(s)<\gp(t)$.
\item $\ldots \gp(t)^{b_t}\ldots \gp(s)^{b_s} \ldots \gp(u)^{-1}\ldots$ for $t<s<u$ and $\gp(s)<\gp(u)<\gp(t)$.
\item $\ldots \gp(s)^{b_s}\ldots \gp(t)^{b_t} \ldots \gp(u)^{-1}\ldots$ for $s<t<u$ and $\gp(s)<\gp(u)<\gp(t)$.
\end{enumerate}
In all the above $(1)-(12)$ cases the line $L_{\gp(t)}$ does not separate the origin and the vertex $L_{\gp(s)}\cap L_{\gp(u)}$.
\begin{enumerate}[label=(\arabic*$'$)]
	\item $\ldots \gp(t)^{b_t}\ldots \gp(s)^{b_s}\ldots \gp(u)^{-1}\ldots $ for $t<s<u$ and $\gp(t)<\gp(s)<\gp(u)$.
	\item $\ldots \gp(s)^{b_s}\ldots \gp(t)^{b_t} \ldots \gp(u)^{-1}\ldots$ for $s<t<u$ and $\gp(t)<\gp(s)<\gp(u)$.
	\item $\ldots \gp(t)^{b_t}\ldots \gp(s)^{b_s} \ldots \gp(u)^{+1}\ldots$ for $t<s<u$ and $\gp(t)<\gp(u)<\gp(s)$.
	\item $\ldots \gp(s)^{b_s}\ldots \gp(t)^{b_t} \ldots \gp(u)^{+1}\ldots$ for $s<t<u$ and $\gp(t)<\gp(u)<\gp(s)$.
	\item $\ldots \gp(t)^{b_t}\ldots \gp(s)^{b_s} \ldots \gp(u)^{+1}\ldots$ for $t<s<u$ and $\gp(u)<\gp(t)<\gp(s)$.
	\item $\ldots \gp(s)^{b_s}\ldots \gp(t)^{b_t} \ldots \gp(u)^{+1}\ldots$ for $s<t<u$ and $\gp(u)<\gp(t)<\gp(s)$.
	\item $\ldots \gp(t)^{b_t}\ldots \gp(s)^{b_s} \ldots \gp(u)^{+1}\ldots$ for $t<s<u$ and $\gp(s)<\gp(t)<\gp(u)$.
	\item $\ldots \gp(s)^{b_s}\ldots \gp(t)^{b_t} \ldots \gp(u)^{+1}\ldots$ for $s<t<u$ and $\gp(s)<\gp(t)<\gp(u)$.
	\item $\ldots \gp(t)^{b_t}\ldots \gp(s)^{b_s} \ldots \gp(u)^{-1}\ldots$ for $t<s<u$ and $\gp(u)<\gp(s)<\gp(t)$.
	\item $\ldots \gp(s)^{b_s}\ldots \gp(t)^{b_t} \ldots \gp(u)^{-1}\ldots$ for $s<t<u$ and $\gp(u)<\gp(s)<\gp(t)$.
	\item $\ldots \gp(t)^{b_t}\ldots \gp(s)^{b_s} \ldots \gp(u)^{+1}\ldots$ for $t<s<u$ and $\gp(s)<\gp(u)<\gp(t)$.
	\item $\ldots \gp(s)^{b_s}\ldots \gp(t)^{b_t} \ldots \gp(u)^{+1}\ldots$ for $s<t<u$ and $\gp(s)<\gp(u)<\gp(t)$.
\end{enumerate}
In all the above $(1')-(12')$ cases the line $L_{\gp(t)}$ does separate the origin and the vertex $L_{\gp(s)}\cap L_{\gp(u)}$.

If $L_{\gp(t)}$ is a line at infinity for the arrangement $\mcl{L}_n$ then possibilities in one set of  twelve cases occur but not in both sets. Now suppose possibilities in the first set occur, that is, $L_{\gp(t)}$ does not separate origin and the vertices of intersection of the arrangement. We have $b_t=1$.

\begin{enumerate}[label=(\alph*)]
\item Suppose we have the following list of sub-symbols.
\equ{\ldots \gp(p)^{b_p} \ldots \gp(q)^{b_q}\ldots \gp(t)^{+1} \ldots \gp(s)^{+1} \ldots \gp(u)^{+1} \ldots}
\begin{enumerate}[label=(\roman*)]
\item The sub-symbols $\ldots \gp(t)^{+1} \ldots \gp(s)^{+1} \ldots \gp(u)^{+1} \ldots$ implies that we have from cases $(1)$ and $(9)$ either $\gp(t)<\gp(s)<\gp(u)$ or $\gp(u)<\gp(s)<\gp(t)$.
\item The sub-symbols $\ldots \gp(q)^{b_q}\ldots \gp(t)^{+1} \ldots \gp(u)^{+1} \ldots $ implies that we have from cases $(2)$ and $(10)$ either $\gp(t)<\gp(q)<\gp(u)$ or $\gp(u)<\gp(q)<\gp(t)$.
\item The sub-symbols $\ldots \gp(p)^{b_p}\ldots \gp(t)^{+1} \ldots \gp(u)^{+1} \ldots $ implies that we have from cases $(2)$ and $(10)$ either $\gp(t)<\gp(p)<\gp(u)$ or $\gp(u)<\gp(p)<\gp(t)$.
\item The sub-symbols $\ldots \gp(q)^{b_q}\ldots \gp(t)^{+1} \ldots \gp(s)^{+1} \ldots $ implies that we have from cases $(2)$ and $(10)$ either $\gp(t)<\gp(q)<\gp(s)$ or $\gp(s)<\gp(q)<\gp(t)$.
\item The sub-symbols $\ldots \gp(p)^{b_p}\ldots \gp(t)^{+1} \ldots \gp(s)^{+1} \ldots $ implies that we have from cases $(2)$ and $(10)$ either $\gp(t)<\gp(p)<\gp(s)$ or $\gp(s)<\gp(p)<\gp(t)$.
\end{enumerate}
So we conclude from (a):(i)$-$(v) that if either $\gp(t)<\gp(s)$ or $\gp(t)<\gp(u)$ then we have 
\equ{\gp(t)<\min\{\gp(p),\gp(q)\}<\max\{\gp(p),\gp(q)\}<\gp(s)<\gp(u).}
We also conclude that if either $\gp(s)<\gp(t)$ or $\gp(u)<\gp(t)$ then we have  
\equ{\gp(u)<\gp(s)<\min\{\gp(p),\gp(q)\}<\max\{\gp(p),\gp(q)\}<\gp(t).}
\item Suppose we have the following list of sub-symbols.
\equ{\ldots \gp(p)^{b_p} \ldots \gp(q)^{b_q}\ldots \gp(t)^{+1} \ldots \gp(s)^{+1} \ldots \gp(u)^{-1} \ldots}
\begin{enumerate}[label=(\roman*)]
\item The sub-symbols $\ldots \gp(t)^{+1} \ldots \gp(s)^{+1} \ldots \gp(u)^{-1} \ldots$ implies that we have from cases $(3),(5),(7),(11)$ we have either $\gp(t)<\gp(u)<\gp(s)$ or $\gp(u)<\gp(t)<\gp(s)$ or $\gp(s)<\gp(t)<\gp(u)$ or $\gp(s)<\gp(u)<\gp(t)$.
\item The sub-symbols $\ldots \gp(q)^{b_q} \ldots \gp(t)^{+1} \ldots \gp(s)^{+1} \ldots$ implies that we have from cases $(2)$ and $(10)$ either $\gp(t)<\gp(q)<\gp(s)$ or $\gp(s)<\gp(q)<\gp(t)$. 
\item The sub-symbols $\ldots \gp(p)^{b_p} \ldots \gp(t)^{+1} \ldots \gp(s)^{+1} \ldots$ implies that we have from cases $(2)$ and $(10)$ either $\gp(t)<\gp(p)<\gp(s)$ or $\gp(s)<\gp(p)<\gp(t)$. 
\item The sub-symbols $\ldots \gp(q)^{b_q} \ldots \gp(t)^{+1} \ldots \gp(u)^{-1} \ldots$ implies that we have from cases $(4),(6),(8),(12)$ either $\gp(t)<\gp(u)<\gp(q)$ or $\gp(u)<\gp(t)<\gp(q)$ or $\gp(q)<\gp(t)<\gp(u)$ or $\gp(q)<\gp(u)<\gp(t)$.
\item The sub-symbols $\ldots \gp(p)^{b_p} \ldots \gp(t)^{+1} \ldots \gp(u)^{-1} \ldots$ implies that we have from cases $(4),(6),(8),(12)$ either $\gp(t)<\gp(u)<\gp(p)$ or $\gp(u)<\gp(t)<\gp(p)$ or $\gp(p)<\gp(t)<\gp(u)$ or $\gp(p)<\gp(u)<\gp(t)$.
\end{enumerate}
So we conclude from (b):(i)$-$(v) that if $\gp(t)<\gp(s)$ then
\equa{\text{either }&\gp(t)<\gp(u)<\min\{\gp(p),\gp(q)\}<\max\{\gp(p),\gp(q)\}<\gp(s)\\
\text{or }&\gp(u)<\gp(t)<\min\{\gp(p),\gp(q)\}<\max\{\gp(p),\gp(q)\}<\gp(s).}
We also conclude that if $\gp(s)<\gp(t)$ then 
\equa{\text{either }&\gp(s)<\min\{\gp(p),\gp(q)\}<\max\{\gp(p),\gp(q)\}<\gp(u)<\gp(t)\\
\text{or }&\gp(s)<\min\{\gp(p),\gp(q)\}<\max\{\gp(p),\gp(q)\}<\gp(t)<\gp(u).}
\item Suppose we have the following list of sub-symbols.
\equ{\ldots \gp(p)^{b_p} \ldots \gp(q)^{b_q}\ldots \gp(t)^{+1} \ldots \gp(s)^{-1} \ldots \gp(u)^{+1} \ldots}	
\begin{enumerate}[label=(\roman*)]
\item The sub-symbols $\ldots \gp(t)^{+1} \ldots \gp(s)^{-1} \ldots \gp(u)^{+1} \ldots$ implies that we have from cases $(1)$ and $(9)$ we have either $\gp(t)<\gp(s)<\gp(u)$ or $\gp(u)<\gp(s)<\gp(t)$.
\item The sub-symbols $\ldots \gp(q)^{b_q} \ldots \gp(t)^{+1} \ldots \gp(u)^{+1} \ldots$ implies that we have from cases $(2)$ and $(10)$ we have either $\gp(t)<\gp(q)<\gp(u)$ or $\gp(u)<\gp(q)<\gp(t)$.
\item The sub-symbols $\ldots \gp(p)^{b_p} \ldots \gp(t)^{+1} \ldots \gp(u)^{+1} \ldots$ implies that we have from cases $(2)$ and $(10)$ we have either $\gp(t)<\gp(p)<\gp(u)$ or $\gp(u)<\gp(p)<\gp(t)$.
\item The sub-symbols $\ldots \gp(q)^{b_q} \ldots \gp(t)^{+1} \ldots \gp(s)^{-1} \ldots$ implies that we have from cases $(4),(6),(8),(12)$ either $\gp(t)<\gp(s)<\gp(q)$ or $\gp(s)<\gp(t)<\gp(q)$ or $\gp(q)<\gp(t)<\gp(s)$ or $\gp(q)<\gp(s)<\gp(t)$.
\item The sub-symbols $\ldots \gp(p)^{b_p} \ldots \gp(t)^{+1} \ldots \gp(s)^{-1} \ldots$ implies that we have from cases $(4),(6),(8),(12)$ either $\gp(t)<\gp(s)<\gp(p)$ or $\gp(s)<\gp(t)<\gp(p)$ or $\gp(p)<\gp(t)<\gp(s)$ or $\gp(p)<\gp(s)<\gp(t)$.
\end{enumerate}
So we conclude from (c):(i)$-$(v) that if $\gp(t)<\gp(u)$ then 
\equ{\gp(t)<\gp(s)<\min\{\gp(p),\gp(q)\}<\max\{\gp(p),\gp(q)\}<\gp(u).}
We also conclude that if $\gp(u)<\gp(t)$ then 
\equ{\gp(u)<\min\{\gp(p),\gp(q)\}<\max\{\gp(p),\gp(q)\}<\gp(s)<\gp(t).}
\item Suppose we have the following list of sub-symbols.
\equ{\ldots \gp(p)^{b_p} \ldots \gp(q)^{b_q}\ldots \gp(t)^{+1} \ldots \gp(s)^{-1} \ldots \gp(u)^{-1} \ldots}	
\begin{enumerate}[label=(\roman*)]
\item The sub-symbols $\ldots \gp(t)^{+1} \ldots \gp(s)^{-1} \ldots \gp(u)^{-1} \ldots$ implies that we have from cases $(3),(5),(7),(11)$ we have either $\gp(t)<\gp(u)<\gp(s)$ or $\gp(u)<\gp(t)<\gp(s)$ or $\gp(s)<\gp(t)<\gp(u)$ or $\gp(s)<\gp(u)<\gp(t)$.
\item The sub-symbols $\ldots \gp(q)^{b_q} \ldots \gp(t)^{+1} \ldots \gp(u)^{-1} \ldots$ implies that we have from cases $(4),(6),(8),(12)$ either $\gp(t)<\gp(u)<\gp(q)$ or $\gp(u)<\gp(t)<\gp(q)$ or $\gp(q)<\gp(t)<\gp(u)$ or $\gp(q)<\gp(u)<\gp(t)$.
\item The sub-symbols $\ldots \gp(p)^{b_q} \ldots \gp(t)^{+1} \ldots \gp(u)^{-1} \ldots$ implies that we have from cases $(4),(6),(8),(12)$ either $\gp(t)<\gp(u)<\gp(p)$ or $\gp(u)<\gp(t)<\gp(p)$ or $\gp(p)<\gp(t)<\gp(u)$ or $\gp(p)<\gp(u)<\gp(t)$.
\item The sub-symbols $\ldots \gp(q)^{b_q} \ldots \gp(t)^{+1} \ldots \gp(s)^{-1} \ldots$ implies that we have from cases $(4),(6),(8),(12)$ either $\gp(t)<\gp(s)<\gp(q)$ or $\gp(s)<\gp(t)<\gp(q)$ or $\gp(q)<\gp(t)<\gp(s)$ or $\gp(q)<\gp(s)<\gp(t)$.
\item The sub-symbols $\ldots \gp(p)^{b_q} \ldots \gp(t)^{+1} \ldots \gp(s)^{-1} \ldots$ implies that we have from cases $(4),(6),(8),(12)$ either $\gp(t)<\gp(s)<\gp(p)$ or $\gp(s)<\gp(t)<\gp(p)$ or $\gp(p)<\gp(t)<\gp(s)$ or $\gp(p)<\gp(s)<\gp(t)$.
\end{enumerate}
So we conclude from (d):(i)$-$(v) that if $\gp(t)<\gp(u)<\gp(s)$ then
\equa{\text{either }&\min\{\gp(p),\gp(q)\}<\max\{\gp(p),\gp(q)\}<\gp(t)<\gp(u)<\gp(s)\\
\text{or }&\min\{\gp(p),\gp(q)\}<\gp(t)<\gp(u)<\gp(s)<\max\{\gp(p),\gp(q)\}\\
\text{or }&\gp(t)<\gp(u)<\gp(s)<\min\{\gp(p),\gp(q)\}<\max\{\gp(p),\gp(q)\}.}  	
We conclude that if $\gp(u)<\gp(t)<\gp(s)$ then 
\equa{\text{either }&\min\{\gp(p),\gp(q)\}<\max\{\gp(p),\gp(q)\}<\gp(u)<\gp(t)<\gp(s)\\
	\text{or }&\min\{\gp(p),\gp(q)\}<\gp(u)<\gp(t)<\gp(s)<\max\{\gp(p),\gp(q)\}\\
	\text{or }&\gp(u)<\gp(t)<\gp(s)<\min\{\gp(p),\gp(q)\}<\max\{\gp(p),\gp(q)\}.}  	
We conclude that if $\gp(s)<\gp(t)<\gp(u)$ then 
\equa{\text{either }&\min\{\gp(p),\gp(q)\}<\max\{\gp(p),\gp(q)\}<\gp(s)<\gp(t)<\gp(u)\\
	\text{or }&\min\{\gp(p),\gp(q)\}<\gp(s)<\gp(t)<\gp(u)<\max\{\gp(p),\gp(q)\}\\
	\text{or }&\gp(s)<\gp(t)<\gp(u)<\min\{\gp(p),\gp(q)\}<\max\{\gp(p),\gp(q)\}.}  	
We conclude that if $\gp(s)<\gp(u)<\gp(t)$ then
\equa{\text{either }&\min\{\gp(p),\gp(q)\}<\max\{\gp(p),\gp(q)\}<\gp(s)<\gp(u)<\gp(t)\\
	\text{or }&\min\{\gp(p),\gp(q)\}<\gp(s)<\gp(u)<\gp(t)<\max\{\gp(p),\gp(q)\}\\
	\text{or }&\gp(s)<\gp(u)<\gp(t)<\min\{\gp(p),\gp(q)\}<\max\{\gp(p),\gp(q)\}.}  	
\end{enumerate}
From these inequalities we infer the conditions (1):(A)$-$(E),(2):(i)$-$(iii), 
(3):(a)$-$(e) of the theorem. The converse also holds. This proves the theorem.
\end{proof}

\section{\bf{An Open Question}}
In this section we pose an open question for an arbitrary line arrangement which need not be of infinity type and need not have global cyclicity.
\begin{ques}
Let $\mcl{L}_n=\{L_1,L_2,\ldots,L_n\}$ be a line arrangement in the plane. Give a combinatorial nomenclature for the line arrangement and describe the triangles present in the line arrangement combinatorially.
\end{ques}
\section{\bf{Mathematica Function Code which outputs the List of Triangles of an Infinity Type Line Arrangement}}
\label{sec:Mathematica}
Given below, is the mathematica code for a function, whose input is the nomenclature of an infinity type line arrangement and whose output is the list of triangles in the arrangement.
\includepdf[pages=-, pagecommand={}]{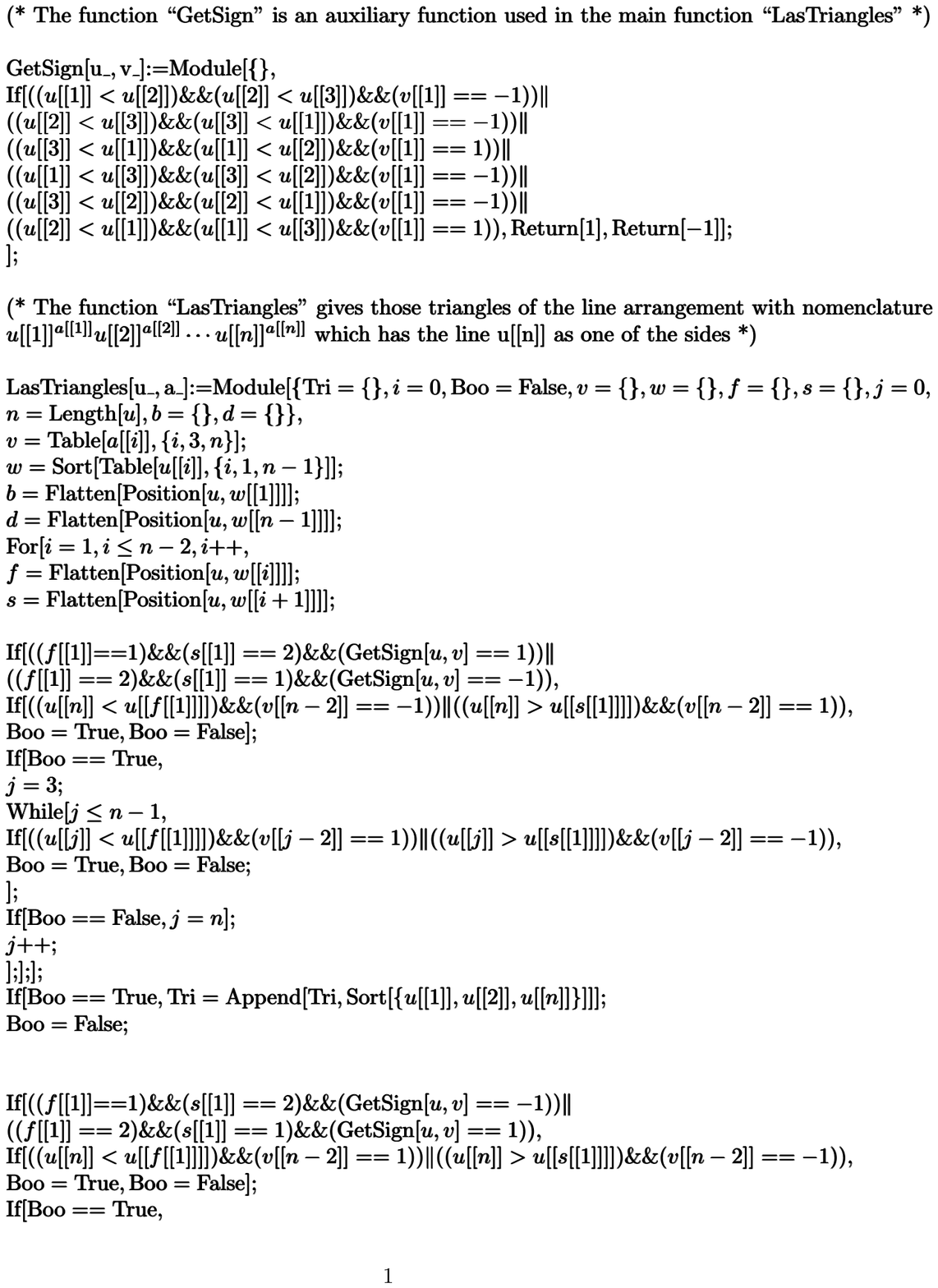}

\end{document}